\definecolor{carminepink}{rgb}{0.92, 0.3, 0.26}
\declaretheorem[parent=section]{theorem}
\declaretheorem[sibling=theorem]{proposition}
\declaretheorem[sibling=theorem]{lemma}
\declaretheorem[sibling=theorem,style=remark]{remark}
\declaretheorem[sibling=theorem,style=definition]{definition}
\declaretheorem[sibling=theorem,style=condition]{condition}
\DeclareSymbolFont{bbold}{U}{bbold}{m}{n}
\DeclareSymbolFontAlphabet{\mathbbold}{bbold}
\newcommand{\reals}{\mathbb{R}}		
\newcommand{\naturals}{\mathbb{N}}					          
\newcommand{\integers}{\mathbb{Z}}
\newcommand{\Ex}{\mathbb{E}}				              
\newcommand{\Prob}{\mathbb{P}}    
\newcommand{\Tq}{\mathbb{T}^q}
\newcommand{\betac}{\beta_{j,k}}                          
\newcommand{\barg}{\frac{n}{B^j}}                          
\newcommand{\cubew}{\lambda_{j,k}}
\newcommand{\cubep}{\xi_{j,k}}
\providecommand{\needlet}[1]{\psi_{j,k}\left(#1\right)}	
\providecommand{\bfun}[1]{b\left(#1\right)}				  
\newcommand{\sumk}{\sum_{k=1}^{K_j}}   
\newcommand{\ind}[1]{\mathbbold{1}_{\bbra{#1}}}
\providecommand{\ncomp}[1]{n_{#1}}
\providecommand{\thetacomp}[1]{\theta_{#1}}
\definecolor{electricultramarine}{rgb}{0.25, 0.0, 1.0}
\providecommand{\cc}[1]{\overline{#1}}
\providecommand{\bra}[1]{\left(#1\right)}			      
\providecommand{\sbra}[1]{\left[#1\right]}			    
\providecommand{\bbra}[1]{\left\{#1\right\}}
\renewcommand{\ll}{\ell}							         
\renewcommand{\l}{\left}                                      
\renewcommand{\r}{\right}
\newcommand{\E}{\mathbb{E}}
\newcommand{\norm}[1]{\left\lVert#1\right\rVert}
\newcommand{\R}{\mathbb{R}}
\newcommand{\cvlaw}{\overset{\mathscr{L}}{\rightarrow}}
\begin{document}

\author{Solesne Bourguin}
\address{Boston University, Department of Mathematics and Statistics, 111 Cummington Mall, Boston, MA 02215, USA}
\email{solesne.bourguin@gmail.com}
\author{Claudio Durastanti}
\address{Ruhr University Bochum,  Faculty of Mathematics,  D-44780 Bochum, Germany}
\email{claudio.durastanti@gmail.com}
\thanks{C. Durastanti is supported by the German DFG grant \textit{GRK} 2131}
  
\title[On normal approximations for the two-sample problem]{On normal approximations for the two-sample problem on multidimensional tori} 
\begin{abstract}
In this paper, quantitative central limit theorems for $U$-statistics on the $q$-dimensional torus defined in the framework of the two-sample problem for Poisson processes are derived. In particular, the $U$-statistics are built over tight frames defined by wavelets, named toroidal needlets, enjoying excellent localization properties in both harmonic and frequency domains. The Berry-Ess\'een type bounds associated with the normal approximations for these statistics are obtained by means of the so-called Stein-Malliavin techniques on the Poisson space, as introduced by Peccati, Sol\'e, Taqqu, Utzet (2011) and further developed by Peccati, Zheng (2010) and Bourguin, Peccati (2014). Particular cases of the proposed framework allow to consider the two-sample problem on the circle as well as the local two-sample problem on $\R^q$ through a local homeomorphism argument.
\end{abstract}
\subjclass[2010]{60F05, 60G55, 62G09, 62G10, 62G20.}
\keywords{Two-sample problem; $U$-statistics; Poisson processes; wavelets; needlets; harmonic analysis on the torus, Stein-Malliavin method.}
\maketitle

\section{Introduction}\label{sec:intro}
\noindent The aim of this paper is to establish quantitative central limit theorems, by means of Stein-Malliavin techniques, for wavelet-based U-statistics on $q$-dimensional tori arising in the context of the two-sample problem for Poisson processes. The two-sample (or homogeneity) problem for Poisson processes can be described as follows: let $N_{1}$ and $N_{2}$ denote two independent Poisson processes observed on a measurable space $\mathbb{X}$, whose intensities with respect to some positive non-atomic $\sigma$-finite measure $\mu$ are denoted by $f_{1}$ and $f_{2}$, respectively. Given the observation of $N_{1}$ and $N_{2}$, the two sample problem aims at testing the null-hypothesis $(H_{0}) \colon f_{1}=f_{2}$ versus the alternative hypothesis $(H_1) \colon f_{1}\neq f_{2}$, see for instance \cite{fromont} for an in-depth description. In such a problem, two-sample $U$-statistics arise very naturally (see for instance \cite{fromont, vandervaart, dasgupta}) as they can be used to approximate both the distribution under the null as well as the alternative hypotheses (note that in the case of the alternative hypothesis, one has to deal with different underlying distributions for the Poisson processes, see e.g. \cite{lee}). 
\\~\\
This paper assumes a slightly different, but equivalent framework in which the null-hypothesis $(H_{0})$ is that observations of a unique Poisson process $N$, sampled over two disjoint $q$-dimensional tori $\mathbb{T}^q_1$ and $\mathbb{T}^q_2$, are distributed according to the same intensity with respect to $\mu$ (note when working under the null-hypothesis, this is equivalent to considering two independent Poisson processes over the same support). For this purpose, let $\left\lbrace N_t \colon t \geq 0\right\rbrace $ be a Poisson process over a $q$-dimensional torus $\mathbb{T}^q$ with control measure given by 
\begin{equation}
\label{eqn:control}
\mu_t(d\theta) :=R_t f\left( \theta \right)  d\theta,
\end{equation}
where $R_t>0$ denotes, roughly speaking, the expected number of observations at time $t>0$ and $f$ is a density function over $\mathbb{T}^q$ satisfying one of two possible mild regularity conditions on its spectral decomposition (see Conditions \ref{cond1} and \ref{cond2} in Subsection \ref{sub:approx}). Consider the estimator, given in the form of a $U$-statistic, defined by
\begin{eqnarray}
\label{mainestimator}
U_{j}\left( t\right)  &:=& \sum_{k=1}^{K_j}\left[ \left( \int_{\mathbb{T}_{1}^{q}}\psi _{j,k}\left( \theta \right) N_{t}\left( d\theta \right) -\int_{\mathbb{T}_{2}^{q}}\psi _{j,k}\left( \theta \right) N_{t}\left( d\theta\right) \right) ^{2} \right.  \nonumber \\
&& \left. \qquad\qquad\qquad\qquad\qquad\qquad\qquad\qquad - \int_{\mathbb{T}_{1}^{q}}\psi _{j,k}^{2}\left( \theta \right)
N_{t}\left( d\theta \right) -\int_{\mathbb{T}_{2}^{q}}\psi _{j,k}^{2}\left(
\theta \right) N_{t}\left( d\theta \right) \right], 
\end{eqnarray}
where, given a scale parameter $B$, the set $\left\lbrace \psi_{j,k} \colon j\geq0,\ k=1,\ldots,K_j \right\rbrace $ is the set of the $q$-dimensional toroidal needlets for which the index $j$ denotes the multiresolution level, while $K_j$ stands for the cardinality of needlets at a given resolution level $j$ is fixed (a more detailed description can be found in Subsection \ref{sub:needlets}).
\\~\\
The strategy for deriving the upcoming quantitative normal approximation results for the two-sample problem on multidimensional tori will be to make use of the celebrated Stein-Malliavin techniques obtained in \cite{PSTU} through the combination of Stein's method with the Malliavin calculus of variations for Poisson functionals (see Section \ref{sub:approx} for a self-contained description of these techniques). 
\\~\\
The Berry-Ess\'een type bounds associated with the limit theorems proved below only depend on the resolution level $j$ of the needlets, on the expected number of observations $R_t$ sampled at time $t$, and on the spectral parameter $\alpha$ controlling the rate of decay of the density function $f$. As pointed out in Remark \ref{remarkonregularity}, these bounds, through their dependence on the regularity parameter $\alpha$ of the density function $f$ appearing in the control measure of the Poisson process provides a new quantitative estimation of the impact of the regularity of $f$ in the quality of the normal approximation of the estimator \eqref{mainestimator} (as well as the impact on the rate of convergence to the Gaussian distribution when such a convergence applies). A rigorous formulation of these quantitative bounds, as well as of the role of the regularity of $f$, is given in Theorem \ref{maintheorem}, and are the main findings of this paper, and to our knowledge the first quantitative bounds for the two-sample problem involving the regularity parameter of the density function $f$.
 
\subsection{An overview in the literature}\label{sub:overview}
\noindent The comparison between two probability distributions has been an important and long-lasting subject with applications in a wide range of fields, such as biology, medicine, physics and cosmology (among a lot of others). Following the seminal papers \cite{cox, przy}, dealing with homogeneous Poisson processes, two-sample test statistics were introduced within many different settings: some kernel-based procedures were proposed in \cite{aht, gretton, ht2002}, while the study of asymptotic properties of U-statistics based on non-homogeneous Poisson processes was addressed, for instance, in \cite{deshpande, fromont}. The reader is also referred to \cite{lee,vandervaart} for further details and discussions on this topic.
\\~\\
Stein-Malliavin techniques, first introduced in \cite{nourdinpeccati} have become increasingly popular within the scientific community. Initially used to establish Berry-Ess\'een type bounds for functionals of Gaussian fields, these techniques have been extended to framework of Poisson random measures (see e.g. \cite{BouPec,PSTU, PecZheng}). More recently, such methods have been generalized to the setting of spectral theory of general Markov diffusion generators (see e.g. \cite{simon,ledoux}). The reader is reffered to the text \cite{noupebook} for a detailed introduction to this topic. In the context of statistical analysis over the sphere, Stein-Malliavin methods have proven quite powerful to establish asymptotic normality of wavelet-based linear and nonlinear statistics (respectively in \cite{durastanti4, dmp} and \cite{bdmp}) built over the sphere by means of the so-called spherical needlets.
\\~\\
Spherical needlets were introduced in \cite{npw2,npw1} and form a tight frame on the sphere, so that a reconstruction formula (the counterpart of the harmonic expansion in the wavelet framework) holds. Furthermore, they enjoy remarkable localization properties in both harmonic and spatial domains, while their statistical properties when applied to the study of spherical random fields were investigated in \cite{bkmpAoS}. Further remarkable statistical applications can be found, for instance, in \cite{bkmpAoSb, cammar,dlmejs}. This paper makes use of wavelets on $\mathbb{T}^q$, built exactly as in \cite{npw1}, called toroidal needlets (see e.g. Subsection \ref{sub:needlets} for technical details). Other extensions on various manifolds such as the unit ball of $\mathbb{R}^3$ and spherical spin fiber bundles can be found in \cite{dfhmp} and \cite{gelmar}, respectively. The monograph \cite{MaPeCUP} should be noted to be a remarkable go-to reference for details and discussions on both the theoretical and applied aspects of wavelets and needlets.
\\~\\
Finally, note that the choice of $\Tq$ as the support of the Poisson processes under consideration in this work is very natural in view of the size of the literature dealing with statistical tests aimed at comparing two distributions over circular data, which corresponds the the case $q=1$ of the framework of thhis paper (see for instance \cite{eplett,mardiajupp,jamma} among many others). In this light, the framework of a $q$-dimensional tori $\Tq$ can be viewed as a unifying generalization as it also encompasses the local two-sample problem over $\R^q$ through the fact that $\R^q$ and $\Tq$ are locally homeomorphic and the spatial concentration of the toroidal needlets which ensures consistency of the normal approximation results for any local approximation of $\R^q$ by $\Tq$. 
      
\subsection{Main results}\label{sub:approx}
\noindent The Poisson random measures considered in the framework of this paper are Poisson random measures over $\mathbb{T}^q$ with control measures given by \eqref{eqn:control}. The density function $f$ can be viewed in terms of its harmonic expansion (see Section \ref{sec:background} for more details), that is, for any $\theta \in \mathbb{T}^q$, it holds that 
\begin{equation}\label{eqn:densityfunc}
f\l(\theta\r)=\sum_{n \in \mathbb{Z}^q} a_n s_n\l(\theta\r),
\end{equation}
where $\bbra{s_n \colon n \in \mathbb{Z}^q}$ is the set of eigenfunctions of the Laplace-Beltrami operator on $\mathbb{T}^q$, and forms an orthonormal system for $\Tq$.  $\bbra{a_n \colon n \in \mathbb{Z}^q}$ are the corresponding Fourier (or harmonic) coefficients. Two sets of assumptions on the density function $f$ (more precisely on the harmonic expansion of $f$) will be used throughout the paper, namely
\begin{condition}\label{cond1}For any $n \in \integers^q$, there exist two constants $c>0$ and $\alpha \geq \frac{1}{2}$ such that 
\begin{equation}\label{eqn:condition1}
a_{n}=c\left( \ell_n+1\right) ^{-\alpha },
\end{equation}
where $\ell_n$ is the eigenfunction of the Laplace-Beltrami operator associated with the multiindex $n$.
\end{condition}
\begin{condition}\label{cond2}For any $n \in \integers^q$, there exist two constants $c>0$ and $\alpha \geq \frac{1}{2}$ such that 
	\begin{equation}\label{eqn:condition2}
	a_{n}=c\prod_{m=1}^q\left( n_{m}+1\right) ^{-\alpha },
\end{equation}
where $n_{m}$, $m=1,\ldots,q$, are the components of the multiindex $n$. 
\end{condition}
\begin{remark} Conditions \ref{cond1} and \ref{cond2} imply a strong interdependence and statistical independence between the components of the coefficients of the density function associated to the frequency $n$, respectively. The parameter $\alpha$ controls the rate of decay of the density function $f$. Note that the two conditions reduce to the same one if $q=1$. 
\end{remark}
\noindent Define  $\widetilde{U}_{j}\left( t\right)$ to be the normalized version of the $U$-statistic defined in \eqref{mainestimator}, namely
\begin{equation}
\label{normalizeduj}
\widetilde{U}_{j}\left( t\right) := \frac{U_{j}\left( t\right)}{\sqrt{Var\left( U_{j}\left( t\right)\right) }}.
\end{equation}
The following theorem is the main result of this paper, providing quantitative normal approximation bounds for the two-sample problem over $q$-dimensional tori. Throughout the paper, the symbol $\cvlaw$ denotes convergence in law, while the symbol $\sim$ denotes an asymptotic equivalent and the symbols $\lesssim$ and $\gtrsim$ stand for asymptotically equivalent upper and lower bounds, respectively.
\begin{theorem}
	\label{maintheorem}Let $\widetilde{U}_{j}\left( t\right)$ be given by \eqref{normalizeduj}, $Z \sim\mathcal{N}\left( 0,1\right)$ be a standard Gaussian random variable and $d_W$ denote the Wasserstein metric (see upcoming Definition \ref{defwassersteinmetric}).
\begin{enumerate}[(i)]
\item Assume that Condition \ref{cond1} prevails. Then, it holds that 
	\begin{equation*}
	d_{W}\left( \widetilde{U}_{j}\left( t\right),Z\right)
	\lesssim \begin{cases}  R_t^{-\frac{1}{2}}B^{\frac{j}{4}}+R_t^{-\frac{1}{2}}+B^{-\frac{j}{2}} & \text{if } \alpha \in \left[  1,+\infty \right),  \\  R_t^{-\frac{1}{2}}B^{\frac{j}{4}}+R_t^{-\frac{1}{2}}B^{j\left(1-\alpha\right)}+B^{j\bra{1-2\alpha}} & \text{if } \alpha \in \left[ \frac{1}{2},1\right).  \end{cases}
	\end{equation*}
Furthermore, if $R_t^{-\frac{1}{2}}B^{j \max\left(\frac{1}{4},1-\alpha \right)} \rightarrow 0$ as $t \rightarrow \infty$, then
	\begin{equation*}
	\widetilde{U}_{j}\left( t\right) \cvlaw \mathcal{N}\left( 0,1\right).
	\end{equation*}
\item Assume that Condition \ref{cond2} prevails. Then, it holds that 
	\begin{equation*}
	d_{W}\left( \widetilde{U}_{j}\left( t\right) ,Z \right)
	\lesssim \begin{cases}  R_t^{-\frac{1}{2}}B^{\frac{jq}{4}}+R_t^{-\frac{1}{2}}+B^{-\frac{jq}{2}} & \text{if } \alpha \in \left[ 1,+\infty \right), \\ R_t^{-\frac{1}{2}}B^{\frac{jq}{4}}+R_t^{-\frac{1}{2}}B^{jq\left(1-\alpha\right)}+B^{jq\bra{1-2\alpha}} & \text{if } \alpha \in \left[ \frac{1}{2},1\right).  \end{cases}
	\end{equation*}%
Furthermore, if $ R_t^{-\frac{1}{2}}B^{jq\max\left(\frac{1}{4},1-\alpha \right)} \rightarrow 0$ as $t \rightarrow \infty$, then
	\begin{equation*}
	\widetilde{U}_{j}\left( t\right) \cvlaw \mathcal{N}\left( 0,1\right).
	\end{equation*}
\end{enumerate}
\end{theorem}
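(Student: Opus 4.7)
The plan is to combine the Stein-Malliavin bounds for Poisson functionals (from Peccati-Sol\'e-Taqqu-Utzet, Peccati-Zheng, and Bourguin-Peccati) with the frequency and spatial localization of the toroidal needlets. First, I would recast $U_j(t)$ as an element of the sum of the first two Wiener-It\^o chaoses of the compensated Poisson measure $\widehat{N}_t:=N_t-\mu_t$. Setting $\phi_{j,k}(\theta):=\psi_{j,k}(\theta)\bigl(\mathbf{1}_{\mathbb{T}_1^q}(\theta)-\mathbf{1}_{\mathbb{T}_2^q}(\theta)\bigr)$ and writing $\int \phi_{j,k}\,dN_t=I_1(\phi_{j,k})+\int \phi_{j,k}\,d\mu_t$, the product formula for Poisson multiple integrals rewrites each summand of \eqref{mainestimator} as a second-chaos term $I_2(\phi_{j,k}\otimes \phi_{j,k})$ plus a first-chaos correction plus a deterministic constant. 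Summing over $k$ yields the decomposition
\[
U_j(t)\;=\;I_2(h_j)\;+\;I_1(g_j)\;+\;c_j,\qquad h_j(\theta_1,\theta_2):=\sum_{k=1}^{K_j}\phi_{j,k}(\theta_1)\phi_{j,k}(\theta_2),
\]
with an explicit expression for $g_j\in L^2(\mu_t)$ and the constant $c_j$. By chaos orthogonality, $\mathrm{Var}(U_j(t))=2\|h_j\|_{L^2(\mu_t^{\otimes 2})}^2+\|g_j\|_{L^2(\mu_t)}^2$, and the key building blocks $\int \phi_{j,k}\phi_{j,k'}f\,d\theta$ and $\int \psi_{j,k}^2 f\,d\theta$ can be estimated by combining the band-limited structure of the needlets (supported in frequencies of order $B^j$), the $L^p$-concentration $\|\psi_{j,k}\|_{L^p}^p\lesssim B^{jq(p/2-1)}$, and the harmonic decay of $f$ from Condition \ref{cond1} or \ref{cond2}. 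This produces a variance of order $R_t^2$ when $\alpha\ge 1$ and of order $R_t^2$ multiplied by an $\alpha$-dependent negative power of $B^{j}$ (resp.\ $B^{jq}$) otherwise.

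Next, I would apply the Peccati-Zheng / Bourguin-Peccati Wasserstein bound for $F=I_2(h_j)+I_1(g_j)$: the distance $d_W\bigl(F/\sqrt{\mathrm{Var}(F)},Z\bigr)$ is controlled by a finite list of $L^2$-norms of star-contractions $h_j\star_r^s h_j$ of $h_j$ with itself, by an $L^3$-moment of $g_j$, and by a cross term mixing $g_j$ and $h_j$. Each contraction expands as a multiple sum over $k,k',\dots$ of needlet integrals against $f$ and $f\otimes f$, and re-using the estimates described above, the three groups of terms give rise, respectively, to the three summands appearing in the stated Wasserstein bound; the dependence on $q$ differs between cases (i) and (ii) (exponents in $j$ versus $jq$), and the middle and last summands acquire $\alpha$-dependent $B^j$-factors in the low-regularity regime $\alpha\in[\tfrac{1}{2},1)$. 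The stated convergences in law then follow directly from the bound by checking that the dominant term vanishes under the scaling assumption on $R_t$.

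The main obstacle, and the reason the rate bifurcates at $\alpha=1$, is controlling both the variance and the contraction norms simultaneously when $f$ has low regularity: in that regime the needlet integrals against $f$ are no longer summable at the clean rate coming from orthonormality, so both the denominator (variance) and the numerator (contractions) pick up $\alpha$-dependent $B^j$-factors, whose balance then determines the final exponent. Careful $\alpha$-dependent bookkeeping at each contraction step, together with the tight-frame reconstruction formula and cubature bounds for toroidal needlets, is required to obtain the precise exponents in (i) and (ii); the split between the two cases is a direct consequence of whether the Fourier coefficients of $f$ decay isotropically (Condition \ref{cond1}) or as a product over components (Condition \ref{cond2}).
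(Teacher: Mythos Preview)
Your overall architecture is right: represent $U_j(t)$ in Wiener--It\^o chaos, apply a Stein--Malliavin contraction bound, and estimate the contraction norms using the frequency localization of the needlets together with the spectral decay of $f$. However, two concrete points deviate from what actually happens and would derail the rates.

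First, your chaos decomposition carries an unnecessary $I_1(g_j)+c_j$. The estimator \eqref{mainestimator} is built precisely so that these vanish: since $\phi_{j,k}^2=\psi_{j,k}^2\,\mathbf{1}_{\mathbb{T}_1^q\cup\mathbb{T}_2^q}$, the first-chaos term coming from the product formula for $I_1(\phi_{j,k})^2$ is exactly cancelled by $\int_{\mathbb{T}_1^q}\psi_{j,k}^2\,dN_t+\int_{\mathbb{T}_2^q}\psi_{j,k}^2\,dN_t$, and the constants cancel as well under $(H_0)$. Hence $U_j(t)=I_2(h_j)$ exactly (Lemma~\ref{lemmarepresentation}), and one only needs the pure second-chaos bound of Proposition~\ref{wassboundfordouble}; there is no $L^3$-moment of $g_j$ or mixed term to control.

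Second, and more importantly, your variance estimate has the wrong order. You claim $\mathrm{Var}(U_j(t))\asymp R_t^2$ (times a negative power of $B^j$ when $\alpha<1$), but in fact the variance \emph{grows} with $j$: $\mathrm{Var}(U_j(t))\gtrsim R_t^2 B^{j}$ under Condition~\ref{cond1} and $\gtrsim R_t^2 B^{jq}$ under Condition~\ref{cond2} (Lemma~\ref{lemma:variancelower}), for every $\alpha\ge\tfrac12$. This $B^j$ in the denominator is what produces the exponents $\tfrac14$, $1-\alpha$, $1-2\alpha$ in the final bound; with a variance of order $R_t^2$ the rates would not match the statement.

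Relatedly, the paper does not estimate the contractions via spatial $L^p$-bounds of $\psi_{j,k}$. Instead it uses the cubature exactness to sum out $k$ and rewrite the kernel on the frequency side,
\[
h_j(\theta_1,\theta_2)=\sum_{a,b=1}^2(-1)^{a+b}\sum_{n\in\Lambda_j}b^2\!\left(B^{-j}\ell_n\right)s_n(\theta_1)\overline{s_n}(\theta_2)\,\mathbf{1}_{\mathbb{T}_a^q\times\mathbb{T}_b^q}(\theta_1,\theta_2)
\]
(Lemma~\ref{lemma:kerdue}). Integrating products of $s_n$'s against $f$ then turns $\|h_j\|_{L^4}^4$, $\|h_j\star_2^1 h_j\|_{L^2}^2$, $\|h_j\star_1^1 h_j\|_{L^2}^2$ into finite sums of products of Fourier coefficients $a_{n_1\pm n_2\pm\cdots}$, to which Conditions~\ref{cond1}/\ref{cond2} apply directly; summability for $\alpha>1$ and a Riemann-sum argument for $\tfrac12\le\alpha<1$ give the bifurcation at $\alpha=1$. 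Your proposed route through $\|\psi_{j,k}\|_{L^p}$ and $\int\phi_{j,k}\phi_{j,k'}f$ does not naturally expose the $a_n$'s and would not yield the $\alpha$-dependent exponents without essentially reverting to this Fourier-side computation.
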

\begin{remark}
\label{remarkonregularity}
Observe that the above results indicate a regime transition whenever the regularity parameter $\alpha$ of the density function $f$ hits the value $\alpha =1$. For $\alpha < 1$, the rates of convergence depends on $\alpha$ and are slower the closer $\alpha$ is to $1/2$, whereas for $\alpha \geq 1$, the regularity of $f$ does not influence the rates of convergence anymore. This phenomenon provides a new quantitative interpretation of the impact of the regularity of $f$ in the normal convergence of the studied estimator.
\end{remark}
\subsection{Plan of the paper}\label{sub:plan}
\noindent The rest of the paper is organized as follows: Section \ref{sec:background} contains the needed elements of harmonic and wavelet analysis on the $q$-dimensional torus as well as the version of the Stein-Malliavin approximation results for Poisson random measures used here. Section \ref{sec:approx} contains the proof of the main result, namely Theorem \ref{maintheorem}, while Section \ref{sec:proof} gathers technical auxiliary results along with their proofs.
\section{Background and notation}\label{sec:background}
\noindent This section provides some background on Poisson random measures, quantitative estimates for normal approximations of Poisson multiple integrals, elements of Fourier analysis on $\mathbb{T}^q$ (see e.g. \cite{grafokos}) and the construction of toroidal needlets and their properties (see e.g. \cite{npw2,npw1}). From now on, $n \in\integers^q$ will denote the vector $n=\l(n_1,\ldots,n_q\r)$ and $\ell_n:=\norm{n}_{\ell^2\left(\mathbb{Z}^q\right)}=\sqrt{\sum_{i=1}^q \ncomp{i}^2}$ will denote the eigenvalue of the Laplace-Beltrami operator associated with the multiindex $n$.
\subsection{Poison random measures and multiple Wiener-It\^o integrals}\label{poissonrmmultint}
Let $\bra{\Omega,\mathcal{F},\Prob}$ be a probability space, $\left(\mathbb{X},\mathcal{X} \right) $ be a polish space and $\mu$ be a positive, $\sigma$-finite measure over $\left(\mathbb{X},\mathcal{X} \right) $ with no atoms. Denote by $\mathcal{X}_{\mu}$ the class of those $A \in \mathcal{X}$ such that $\mu(A) < \infty$. Then, a Poisson random measure with control $\mu$ on $\left(\mathbb{X},\mathcal{X} \right) $ with values in $\bra{\Omega,\mathcal{F},\Prob}$ is a map $N \colon \mathcal{X}_{\mu} \rightarrow \Omega$ with the following properties:
\begin{enumerate}[(1)]
\item For any set $A$ in $\mathcal{X}_{\mu}$, $N(A)$ is a Poisson random variable with parameter $\mu(A)$;
\item If $r \in \mathbb{N}$ and $A_1, \ldots ,A_r \in \mathcal{X}_{\mu}$ are disjoint, then $N\left( A_1\right) , \ldots ,N\left( A_r\right)$ are independent;
\item If $r \in \mathbb{N}$ and $A_1, \ldots ,A_r \in \mathcal{X}_{\mu}$ are disjoint, then $N\left(\bigcup_{i=1}^{r}A_i \right) = \sum_{i=1}^{r}N\left( A_i\right)  $.
\end{enumerate}
\noindent If $N$ is a Poisson random measure on $\left(\mathbb{X},\mathcal{X} \right) $ with control $\mu$, denote by $\hat{N}$ the associated centered Poisson random measure
\begin{equation*}  
\hat{N} (A) = N(A) - \mu(A), \ \ A \in \mathcal{X}_{\mu}.
\end{equation*}
Note that one can regard Poisson random measures as acting on $L^2\left(\mu \right) $ through the identifications $N\left( \mathds{1}_{A}\right)   = N(A)$ and $\hat{N} \left( \mathds{1}_{A}\right)  = N(A) - \mu(A)$, for all $A \in \mathcal{X}_{\mu}$.
\noindent For every deterministic function $h \in L^2\left( \mu\right) $, write
\begin{equation*}  
I_{1}\left( h\right) = \hat{N}(h) = \int_{\mathbb{X}}h(x)\hat{N}(dx)
\end{equation*}
to indicate the Wiener-It\^o integral of $h$ with respect to $\hat{N}$. For every $q \geq 2$ and every symmetric function $f \in L^2\left(\mu^q \right) $, denote by $I_{q}\left(f\right) $ the multiple Wiener-It\^o integral of order $q$ of $f$ with respect to $\hat{N}$. For any real constant $c$, set $I_{0}(c) = c$ and for any $f \in L^2\left(\mu^q \right) $ (not necessarily symmetric), set $I_{q}(f) = I_{q}(\widetilde{f} ) $, where 
\begin{equation*}
\widetilde{f}\left( x_1 , \ldots ,x_q\right)  = \frac{1}{q!}\sum_{\pi \in \frak{S}_q}f\left( x_{\pi(1)} , \ldots ,x_{\pi(q)}\right) 
\end{equation*}
denotes the symmetrization of $f$. The reader is referred for instance to \cite[Chapter 5]{peccatitaqqu} for a complete discussion of multiple Poisson integrals and their properties. 
\begin{proposition}
\label{isometryprop}
The following equalities hold for every $q,p \geq 1$ and every symmetric functions $f \in L^2\left(\mu^{q} \right) $ and $g \in L^2\left(\mu^{p} \right) $:
\begin{enumerate}[(1)]
\item $\E\left( I_{q}(f)\right) =0 $;
\item$\E\left( I_{q}(f)I_{p}(g)\right) = q! \left\langle f,g \right\rangle_{L^2\left(\mu^{q} \right) } \mathds{1}_{\left\lbrace q= m\right\rbrace } $.
\end{enumerate}
\end{proposition}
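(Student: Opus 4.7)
The plan is to establish both identities by the standard approximation argument carried out in detail in \cite{peccatitaqqu}, which in the Poisson setting requires some extra care with off-diagonal restrictions because $\mu$ is non-atomic. The starting point is to introduce the class $\mathcal{E}_q$ of elementary symmetric kernels, namely finite linear combinations of indicators $\mathds{1}_{A_{i_1} \times \cdots \times A_{i_q}}$ where the $A_{i_1},\ldots,A_{i_q}$ are pairwise disjoint elements of $\mathcal{X}_\mu$. On $\mathcal{E}_q$, the multiple integral is defined by the explicit formula
$$I_q\bra{\mathds{1}_{A_1 \times \cdots \times A_q}} = \prod_{i=1}^q \hat N(A_i),$$
and then extended to arbitrary symmetric $f \in L^2\bra{\mu^q}$ through symmetrization and linearity, combined with the density of $\mathcal{E}_q$ in the symmetric subspace of $L^2\bra{\mu^q}$.

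Both statements then reduce to computations for independent centered Poisson variables. For part (1), since each elementary term in $I_q(f)$ is a product of independent random variables of the form $\hat N(A_i)$, each of which is centered, the expectation vanishes. For part (2), I would expand $\Ex\sbra{I_q(f) I_p(g)}$ for $f \in \mathcal{E}_q$ and $g \in \mathcal{E}_p$ using multilinearity; by independence across disjoint sets, every term involving a factor $\hat N(A_i)$ appearing an odd number of times vanishes, which forces $p=q$ and forces the indices in the $g$-product to be a permutation of those in the $f$-product. Summing over the $q!$ such permutations and using $\Ex\sbra{\hat N(A_i)^2} = \mu(A_i)$ reproduces the claimed expression $q!\left\langle f,g\right\rangle_{L^2\bra{\mu^q}}$.

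The last step is to extend both identities from $\mathcal{E}_q$ to the full symmetric $L^2$ space by continuity: once the isometry has been verified on elementary kernels, $I_q$ extends to a bounded linear operator on symmetric $L^2\bra{\mu^q}$, and passing to the limit of an approximating sequence in $\mathcal{E}_q$ yields both statements in full generality, with part (2) handled through Cauchy--Schwarz. The main technical obstacle is the off-diagonal restriction that is implicit in the definition of Poisson multiple integrals: when approximating a general symmetric $f$ by elements of $\mathcal{E}_q$, which vanish on the diagonals of $\mathbb{X}^q$, one must verify that the diagonal contributions are negligible in $L^2\bra{\mu^q}$. This is precisely where the non-atomicity of $\mu$ is essential, as it allows one to refine the underlying partitions to make the measure of any neighbourhood of the diagonal arbitrarily small.
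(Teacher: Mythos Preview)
The paper does not prove this proposition; it is stated as a standard fact, with the reader referred to \cite[Chapter 5]{peccatitaqqu} for details. Your sketch is exactly the construction carried out in that reference, so there is nothing to compare: you are reproducing the argument the paper cites.

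One phrasing caveat: you justify the orthogonality in part (2) by saying that terms with a factor $\hat N(A_i)$ appearing an odd number of times vanish. That is Gaussian reasoning; in the Poisson case $\E\bra{\hat N(A)^3}=\mu(A)\neq 0$. What actually makes the computation go through is that, in your elementary kernels, the sets within each product are pairwise disjoint, so any given set can appear at most once in $I_q(f)$ and at most once in $I_p(g)$, hence at most twice in the product. ``Odd'' therefore means ``exactly once'', and that lone factor splits off by independence with mean zero. The conclusion is correct, but the justification should be phrased via this disjointness-plus-independence mechanism rather than via vanishing odd moments.
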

\noindent The following definition introduces the star-contraction operation between two symmetric functions $f \in L^2\bra{\mu^q}$ and $g \in L^2\bra{\mu^p}$.
\begin{definition}
Let $q, p \geq 1$ and let $f\in L^2\bra{\mu^q}$ and $g\in L^2\bra{\mu^p}$ be symmetric functions. The star contraction of index $\bra{r,\ll}$ between $f$ and $g$, denoted by $f \star_r^{\ll} g$, is defined as the $L^2\bra{\mu^{q+p -r -\ell}}$ function given by
	\begin{eqnarray*}
&& f \star^{\ll}_r g\bra{s_1,\ldots,s_{q-r},t_1,\ldots,t_{p-r},\gamma_1,\ldots,\gamma_{r-\ll}}:=\int_{\mathbb{X}^\ll} f\bra{s_1,\ldots,s_{q-r},\gamma_1,\ldots,\gamma_{r-\ll},z_1,\ldots,z_\ll}\\
	&& \qquad\qquad\qquad\qquad\qquad\qquad\qquad\qquad\qquad g\bra{t_1,\ldots,t_{p-r},\gamma_1,\ldots,\gamma_{r-\ll},z_1,\ldots,z_\ll}\mu^{\otimes\ll}\bra{dz_1,\ldots,dz_\ll}.
	\end{eqnarray*}
In particular, $f \star_{0}^{0} g = f \otimes g$, $f \star^{q}_{q} f = \norm{f}^{2}_{L^2\left(\mu^q\right) }$ and $f \star_{q}^{0} f = f^2$.
\end{definition}
\noindent The next statement contains an important product formula for Poisson multiple integrals. Note that the statement involves the star-contraction operators defined above. 
\begin{proposition}
\label{prodformula}
Let $q, p \geq 1$ and let $f\in L^2\bra{\mu^q}$ and $g\in L^2\bra{\mu^p}$ be symmetric functions. Then, 
\begin{equation*}  
I_{q}(f)I_{p}(g) = \sum_{r = 0}^{q \wedge p}r! \binom{q}{r}\binom{p}{r}\sum_{\ell = 0}^{r}\binom{r}{\ell}I_{q+p- r -\ell}\left(\widetilde{f \star^{\ll}_r g} \right).
\end{equation*}
\end{proposition}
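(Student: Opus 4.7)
The plan is to follow the classical approach consisting of a reduction to simple functions followed by an explicit combinatorial computation, see \cite[Chapter 5]{peccatitaqqu}. First, I would observe that both sides of the identity depend continuously, in the $L^2\bra{\Omega,\Prob}$ sense, on the pair $\bra{f,g}\in L^2\bra{\mu^q}\times L^2\bra{\mu^p}$: on the left this follows from the isometry in Proposition \ref{isometryprop} combined with Cauchy--Schwarz, while on the right it follows from the continuity of each star contraction $\bra{f,g}\mapsto f\star_r^{\ll}g$ as an $L^2$-valued operator together with the continuity of the symmetrization operator. By a density argument, it therefore suffices to prove the formula when $f$ and $g$ are symmetrizations of elementary tensor indicators built from a common finite collection $\bbra{A_1,\ldots,A_M}$ of pairwise disjoint sets in $\mathcal{X}_{\mu}$.

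On such simple functions, whenever the selected indices $i_1,\ldots,i_q$ are pairwise distinct, one has the explicit identification
\begin{equation*}
I_q\bra{\mathds{1}_{A_{i_1}}\otimes\cdots\otimes\mathds{1}_{A_{i_q}}}=\prod_{s=1}^{q}\hat{N}\bra{A_{i_s}},
\end{equation*}
and factors indexed by distinct sets are mutually independent centered Poisson variables. In the product $I_q\bra{f}I_p\bra{g}$, the only nontrivial computation then concerns the powers $\hat{N}\bra{A}^m$ that arise when some index $i_s$ coincides with some index $j_t$. These one-dimensional products are handled via the three-term recurrence satisfied by the Charlier polynomials $I_m\bra{\mathds{1}_A^{\otimes m}}$, which rewrites $\hat{N}\bra{A}I_m\bra{\mathds{1}_A^{\otimes m}}$ as a linear combination of $I_{m+1}\bra{\mathds{1}_A^{\otimes\bra{m+1}}}$, $I_m\bra{\mathds{1}_A^{\otimes m}}$, and $I_{m-1}\bra{\mathds{1}_A^{\otimes\bra{m-1}}}$ with explicit coefficients involving $\mu\bra{A}$. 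Iterating this relation across all coincident pairs of indices produces, for each matched coordinate, either a diagonal identification where the matched variable remains free inside a lower-order multiple integral, or a full contraction where it is integrated against $\mu$.

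The main obstacle is the combinatorial bookkeeping: one must check that, after summing over all matchings of coordinates between $f$ and $g$ and all splits of matched coordinates into integrated versus diagonal types, the terms of order $q+p-r-\ll$ aggregate to exactly $r!\binom{q}{r}\binom{p}{r}\binom{r}{\ll}I_{q+p-r-\ll}\bra{\widetilde{f\star_r^{\ll}g}}$. The factors $\binom{q}{r}\binom{p}{r}r!$ come from the selection of $r$ coordinates in $f$ and $r$ in $g$ together with their bijective matching; the factor $\binom{r}{\ll}$ counts the split into $\ll$ fully integrated matches versus $r-\ll$ diagonal identifications; and the symmetrization $\widetilde{\cdot}$ is precisely what absorbs the arbitrary orderings of the surviving free variables, allowing the simple-function computation to be compressed into the compact form of the statement. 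I would verify the multiplicity count at the low orders $q=p=1$ and $q=p=2$ as a sanity check before writing out the general combinatorial argument.
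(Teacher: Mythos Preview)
The paper does not supply its own proof of this proposition: it is stated as a background result and referenced to \cite[Chapter 5]{peccatitaqqu}. Your outline is exactly the classical argument found there, so in that sense there is nothing to compare.

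That said, one step in your sketch deserves more care. You claim that the left-hand side $I_q(f)I_p(g)$ depends continuously on $(f,g)$ in $L^2(\Omega)$ as a consequence of the isometry in Proposition~\ref{isometryprop} together with Cauchy--Schwarz. This is not sufficient: to pass to the limit in $L^2(\Omega)$ for a product one needs $L^4$ control on each factor $I_q(f_n)$ and $I_p(g_n)$, and the isometry only gives $L^2$ bounds. The usual remedies are either to invoke hypercontractivity for Poisson chaos (which is a separate, nontrivial input), or---more in the spirit of the elementary argument you describe---to first establish the identity on simple functions, use it to read off an $L^2$ representation of the product, and then take limits on the right-hand side, where each term $I_{q+p-r-\ll}\bra{\widetilde{f\star_r^{\ll}g}}$ is controlled by the $L^2$ norms of the contractions. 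The latter route also forces you to check that $f\star_r^{\ll}g\in L^2\bra{\mu^{q+p-r-\ll}}$ for general $f,g$, which is itself a point the statement glosses over and which typically requires an additional integrability hypothesis. Apart from this, your combinatorial bookkeeping is correct.
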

~\\

\noindent From now on, let $\mathbb{X} =\reals^+ \times \Tq$ and $\mathcal{X}=\mathcal{B}\bra{\reals^+ \times \Tq}$, where $\mathcal{B}\bra{\reals^+ \times \Tq}$ denotes the class of Borel subsets of $\reals^+ \times \Tq$. $N$ will denote a Poisson random measure on $\reals^+ \times \Tq$ with control measure $\eta=\tau \times \mu$, where $\tau$ is such that $\tau\bra{ds}=R\cdot \lambda\bra{ds}$ with $\lambda\bra{ds}$ denoting the Lebesgue measure on $\reals$ and $R>0$, so that $\tau\bra{\sbra{0,t}}=R \cdot t = R_t$. $\mu$ is defined to be a probability measure on $\Tq$ that is absolutely continuous with respect to the Lebesgue measure on $\Tq$, denoted by $\nu$, so that $$\mu\bra{d\theta}=f\bra{\theta}\nu\bra{d\theta}.$$ For any fixed $t>0$, denote by $N_t$ the Poisson random measure over $\bra{\Tq,\mathcal{B}\bra{\Tq}}$ with control measure $\mu_t := R_t \mu$.
\begin{remark}
As pointed out in \cite{bdmp,dmp,durastanti3}, $t$ can be regarded as the time parameter, so that, for any $A \in \Tq$, $\mu_t\bra{A}$ represents the expected number of observations sampled on $A$ at time $t$.
\end{remark}

\subsection{Stein-Malliavin bounds for Poisson multiple integrals}\label{sub:bounds}

\noindent The Stein-Malliavin bounds that will be made use of in this paper will be stated for the specific Poisson random measure introduced above for notational convenience, but the result holds for any Poisson random measure as defined in Subsection \ref{poissonrmmultint}. Before stating  these bounds, one needs to introduce the so-called Wasserstein metric.
\begin{definition}[Wasserstein metric]
\label{defwassersteinmetric}
Let $\operatorname{Lip}(1)$ denote the class of real-valued Lipschitz functions, from $\R$ to $\R$, with Lipschitz constant less or equal to one, that is functions $h$ that are absolutely continuous and satisfy the relation $\norm{h'}_{\infty} \leq 1$. Given two real-valued random variables $X$ and $Y$, the Wasserstein distance between the laws of $X$ and $Y$, written $d_W\bra{X,Y}$ is defined as 
	\begin{equation*}
	d_W\bra{X,Y}= \sup_{h \in \operatorname{Lip}(1)} \left | \Ex\sbra{h\bra{X}}-\Ex\sbra{h\bra{Y}} \right |. 
	\end{equation*}
\end{definition}
\noindent The following statement is a version of Theorem 4.2 in \cite{PSTU} in the case of double Poisson integrals, stated within the framework of this paper (see also \cite[Example 4.4]{PSTU}).
\begin{proposition}
\label{wassboundfordouble}
Let $Z \sim \mathcal{N}\bra{0,1}$ and assume that $F_j = I_2\bra{h_{j}}$, where the symmetric function $h_{j} \in  L^2\bra{\mu_t^2} $ is such that $2\norm{h_j}^{2}_{L^{2}\left(\mu_t^2 \right) } = 1$, $h_j \star_{2}^{1} h_j \in  L^{2}\left(\mu_t \right) $, $h_j \in L^4\bra{\mu_t^2}$, and
\begin{equation*}
\int_{\Tq}\sqrt{\int_{\Tq}h_j(z,a)^4\mu_t(da)}\mu_t(dz) < \infty.
\end{equation*}
Furthermore, if
\begin{equation*}
\norm{h_j}_{L^4\bra{\mu_t^2}} + \norm{h_j \star^1_1 h_j }_{L^2\bra{\mu_t^2}} + \norm{h_j \star^1_2 h_j }_{L^2\bra{\mu_t}} \rightarrow 0
\end{equation*}
as $j \rightarrow \infty$, then $F_j$ converges in law to $Z$ and the following bound on the Wasserstein distance between $F_j$ and $Z$ holds:
	\begin{equation*}
	d_W\bra{F_j,Z} \leq \sqrt{8} \norm{h_j \star^1_1 h_j }_{L^2\bra{\mu_t^2}} + \left(6 + 2\sqrt{2} \right) \norm{h_j \star^1_2 h_j }_{L^2\bra{\mu_t}} + \sqrt{8} \norm{h_j}_{L^4\bra{\mu_t^2}}^2.
	\end{equation*}
\end{proposition}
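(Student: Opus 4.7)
The plan is to derive the bound as a direct specialization of \cite[Theorem 4.2]{PSTU} to double integrals (see also \cite[Example 4.4]{PSTU}), via the Stein-Malliavin framework on the Poisson space. The starting point is the Stein characterization of the Wasserstein distance: for every $h\in\operatorname{Lip}(1)$, the Stein equation $f_h'(x)-xf_h(x)=h(x)-\Ex\sbra{h\bra{Z}}$ admits a solution with $\norm{f_h'}_\infty\leq\sqrt{2/\pi}$ and $f_h'$ Lipschitz of constant at most $2$, so that $d_W\bra{F_j,Z}\leq\sup_{f}\abs{\Ex\sbra{f'\bra{F_j}-F_jf\bra{F_j}}}$ over this class of test functions.

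The second ingredient is the integration by parts formula for Poisson functionals: denoting by $D$ the add-one-cost operator and by $L^{-1}$ the pseudo-inverse of the Ornstein-Uhlenbeck generator, one has
\begin{equation*}
\Ex\sbra{F_jf\bra{F_j}}=\Ex\sbra{\left\langle DF_j,-DL^{-1}F_j\right\rangle_{L^2\bra{\mu_t}}f'\bra{F_j}}+\mathcal{R},
\end{equation*}
where the Poisson-specific remainder $\mathcal{R}$ is controlled by $\Ex\int_{\Tq}\abs{DF_j\bra{z}}^3\mu_t\bra{dz}$. Since $F_j=I_2\bra{h_j}$ implies $L^{-1}F_j=-F_j/2$ and $DF_j\bra{z}=2I_1\bra{h_j\bra{\cdot,z}}$, one has $\langle DF_j,-DL^{-1}F_j\rangle_{L^2\bra{\mu_t}}=\tfrac12\norm{DF_j}_{L^2\bra{\mu_t}}^2$. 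Applying the product formula (Proposition \ref{prodformula}) to $I_1\bra{h_j\bra{\cdot,z}}^2$ and integrating over $z$ then gives
\begin{equation*}
\tfrac12\norm{DF_j}_{L^2\bra{\mu_t}}^2=2\norm{h_j}_{L^2\bra{\mu_t^2}}^2+2I_2\bra{h_j\star_1^1 h_j}+2I_1\bra{h_j\star_2^1 h_j},
\end{equation*}
so the normalization $2\norm{h_j}_{L^2\bra{\mu_t^2}}^2=1$ exactly cancels the deterministic part against $\Ex\sbra{f'\bra{F_j}}$ in the Stein expression.

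The three resulting contributions are estimated separately. Cauchy-Schwarz combined with the Poisson isometry (Proposition \ref{isometryprop}) and the bounds on $f'$ and its Lipschitz seminorm control the $I_2$- and $I_1$-terms by multiples of $\norm{h_j\star_1^1 h_j}_{L^2\bra{\mu_t^2}}$ and $\norm{h_j\star_2^1 h_j}_{L^2\bra{\mu_t}}$, respectively. The remainder $\mathcal{R}$ is controlled by bounding $\Ex\int\abs{DF_j}^3 d\mu_t$ from above by a constant multiple of $\norm{h_j}_{L^4\bra{\mu_t^2}}^2$, an estimate that requires the stated integrability condition on $h_j$. Tracking the sharp constants as in \cite{PSTU} then yields the prefactors $\sqrt{8}$, $6+2\sqrt{2}$, and $\sqrt{8}$, respectively.

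The main obstacle is the control of $\mathcal{R}$: unlike in the Gaussian setting, the add-one-cost operator fails the Leibniz rule, which generates a second-order correction in the Taylor-type expansion of $f\bra{F_j+DF_j\bra{z}}-f\bra{F_j}-f'\bra{F_j}DF_j\bra{z}$. This correction is absent from Gaussian Malliavin calculus, and its clean reduction to the $L^4$-norm term with the stated constant is the technical core of \cite[Theorem 4.2]{PSTU}. The present result then follows by specializing that theorem to $q=2$ and verifying that the stated integrability assumptions on $h_j$ suffice for its application.
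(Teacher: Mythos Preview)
The paper does not supply its own proof of this proposition: it is stated as a direct specialization of \cite[Theorem~4.2]{PSTU} (with the double-integral case worked out in \cite[Example~4.4]{PSTU}), and no further argument is given. Your sketch correctly reconstructs that Stein-Malliavin argument and explicitly invokes the same reference, so your approach coincides with the paper's.
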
 
\subsection{Harmonic analysis on multidimensional tori}\label{sub:harmonic}
\noindent As is well-known in the literature (see for instance \cite{grafokos}), the $q$-dimensional torus can be viewed as the direct product of  $q$ unit circles, i.e. $\mathbb{T}^q=\mathbb{S}^{1}\times ...\times \mathbb{S}^{1}\subset \mathbb{C%
}^{q}$. Let the generic coordinates over $\mathbb{T}^{q}$ be given by $\theta =\left(
\thetacomp{1},\ldots,\thetacomp{q}\right) $ and let $%
\nu $ denote the uniform Lebesgue measure over $\mathbb{T}%
^{q}$, that is
\begin{equation}\label{eqn:lebesgue}
\nu \left( d\theta \right) =\prod_{i=1}^{q}\rho \left( d\thetacomp{i}\right) 
\end{equation}%
where $\rho $ is the Lebesgue measure over the unit circle. If we denote $\langle \cdot,\cdot \rangle$ the standard scalar product between $q$ dimensional vectors, the set of functions $s_{n}:\mathbb{T}^q\rightarrow \mathbb{C}$, $%
n=\left( \ncomp{1},...,\ncomp{q}\right) \in \mathbb{Z}%
^{q}$, defined by%
\begin{equation}  \label{eqn:torusbasis}
s_{n}\left( \theta \right) :=\left( 2\pi \right) ^{-\frac{q}{2}%
}\exp \left( i \langle n,\theta \rangle\right) ,
\end{equation}%
 forms an orthonormal basis for the functional space $L^{2}\left( \mathbb{T}^{q},\nu \right)$, see again \cite{grafokos}. 
Indeed, $\mathbb{T}^{1}$ can be also identified as an equivalence class of the quotient space $%
\mathbb{R}/2\pi \mathbb{Z}$: a coordinate system on $\mathbb{T}^{q}$ is therefore provided by the canonical representation in $\left[ 0,2\pi \right) ^{q}$. Furthermore, $\left\{ s_{n} \colon  n\in \mathbb{Z}^{q}\right\} $ also coincides with the set of eigenfunctions associated to $\nabla _{\mathbb{T}^{q}}$, the Laplace-Beltrami operator on $\mathbb{T}^{q}$, given by 
\begin{equation*}
\nabla _{\mathbb{T}^{q}}=\sum_{i=1}^{q}\frac{\partial ^{2}}{\partial x_{i}^{2}},
\end{equation*} 
so that $\left( \nabla _{\mathbb{T}^{q}}+\ell_{n}^{2}\right) s_{n}\left( \theta \right)=0$. Hence, the following orthonormality property holds 
\begin{equation}
\label{eqn:ortho}
\int_{\mathbb{T}^{q}}s_{n_{1}}\left( \theta \right) \overline{s_{n_{2}}}%
\left( \theta \right) \nu \left( d\theta \right) =\delta _{n_{1}}^{n_{2}}.
\end{equation}
Any $f\in L^{2}\left( \mathbb{T}^{q},\nu \right) $ can be represented by its harmonic expansion
\begin{equation}
f\left( \theta \right) =\sum_{n\in \mathbb{Z}^{q}}a_{n}s_{n}\left( \theta
\right),\ \ \theta \in \mathbb{T}^{q},  \label{harmonicexp}
\end{equation}%
where, for all $n \in \integers^q$, the complex-valued coefficients $a_{n}$ are the so-called Fourier
coefficients, given by
\begin{equation}\label{eqn:fouriercoeff}
a_{n}=\int_{\mathbb{T}^{q}}f\left(\theta \right) \overline{s_n}\bra{\theta}\nu \left( d\theta \right).
\end{equation}

\subsection{Toroidal needlets}\label{sub:needlets}
\noindent Let us introduce needlet-like wavelets on the $q$-dimensional torus and describe some of their properties. As already mentioned in Section \ref{sec:intro}, needlets were introduced in the
literature on the $q$-dimensional sphere by Narcowich et al. in
\cite{npw2,npw1} (see also \cite{MaPeCUP} for further details). The construction of an analogous wavelet system on $\mathbb{T}^q$ can be roughly viewed as a natural extension of the standard needlets on $\mathbb{S}^1$ to $\mathbb{T}^q$, so that the technical details of the construction will be omitted here for the sake of brevity. Fix a resolution level $j\in \mathbb{N}$. There exists a set of cubature points $\left\{ \xi _{j,k} \colon k=1, \ldots, K_j\right\} $, $\xi _{j,k}\in 
\mathbb{T}^{q}$, associated to a set of cubature weights $\left\{ \lambda
_{j,k} \colon k=1, \ldots, K_j\right\} $ (see e.g. \cite{npw1}). Roughly speaking, $\mathbb{T}^{q}$ can be represented as a partition of $K_{j}$ subregions, named pixels. Each pixel is centered on the corresponding $\xi _{j,k}$ and its area is given by $\lambda _{j,k}$. Fix a scale parameter $B>1$. Then, the $q$-dimensional toroidal needlets are defined by
\begin{equation*}
\needlet{ \theta} =\sqrt{\cubew}\sum_{n\in \mathbb{Z}%
	^{q}} \bfun{B^{-j} \ell_n}s_{n}\left( \theta \right) \overline{s_{n}}%
\left( \cubep\right).
\end{equation*}%
The so-called window function $b:\mathbb{R\mapsto R}^{+}$ satisfies the following properties:
\begin{enumerate}[(1)]
	\item $b$ has compact support in $\left[ B^{-1},B\right] $;
	\item $b\in C^{\infty }\left( \mathbb{R}\right) $;
	\item the so-called \textit{partition of unity property} holds: for any $c>1$, $	\sum_{j\in \mathbb{N}}b^{2}\left( B^{-j}c\right) =1$.
\end{enumerate}
As a consequence, needlets are characterized by the following pivotal properties. In view of (1), for any $j \in \naturals$, $b\left( B^{-j}\ell_{n}\right) $ is different from zero only over a finite subset of $\mathbb{Z}^{q}$. Let $\Lambda _{j}^{q}=\left\{n\in \mathbb{Z}^{q}:\ell_{n}\in \left[ B^{j-1},B^{j+1}\right] \right\}$, so that it holds that
	\begin{equation}\label{eqn:needlet}
	\needlet{ \theta } =\sqrt{\cubew}\sum_{n\in \Lambda
		_{j}^{q}} \bfun{ B^{-j}\ell_{n}} s_{n}\left( \theta \right) \overline{%
		s_{n}}\left( \cubep\right).
	\end{equation}%
	From (2), toroidal needlets enjoy a quasi-exponential localization property in the
	spatial domain, stated as follows: for any $\theta \in \mathbb{T}^{q}$, $M>0$, there exists $%
	c_{M}>0$ such that 
	\begin{equation*}
	\left\vert \psi _{jk}\left( \theta \right) \right\vert \leq \frac{c_{M}B^{%
			\frac{q}{2}j}}{\left( 1+B^{\frac{q}{2}j}d\left( \theta ,\xi _{jk}\right)
		\right) ^{M}},
	\end{equation*}%
	where $d\left( \theta ,\xi _{j,k}\right) $ is the geodesic distance over $%
	\mathbb{T}^{q}$. Loosely speaking, this property ensures that each needlet $%
	\psi _{j,k}\left( \theta \right) $ is not negligible only if $%
	\theta \in E_{j,k}$. As a consequence, the following bounds on the $L^p$ norms of the toroidal needlets hold (see e.g. \cite{npw2}): for any $p\in\left[\left.1, \infty\right)\right.$, there exist positive constants $c_p, C_p$, depending solely on $p$, such that
	\begin{equation}\label{eqn:Lpnorm}
	c_p B^{jq\bra{\frac{1}{2}-\frac{1}{p}}}\leq \norm{\psi_{j,k}}_{L^p\bra{\mathbb{T}^q,d\nu}} \leq C_p B^{jq\bra{\frac{1}{2}-\frac{1}{p}}}
	\end{equation}
	Finally, from (3), one can infer that the needlet system $\left\{\psi_{j,k} \colon j \geq 0,\ k=1, \ldots, K_j\right\}$ is a
	tight frame over $\mathbb{T}^{q}$: for any $f\in L^{2}\left( \mathbb{T}%
	^{q},\nu \right) $, let the needlet coefficients be given by%
	\begin{equation}\label{eqn:needcoeff}
	\betac=\int_{\mathbb{T}^{q}}f\left( \theta \right) \overline{\psi _{j,k}}%
	\left( \theta \right) \nu \left( d\theta \right).
	\end{equation}%
Then, it holds that $\sum_{j,k}\left\vert \beta _{j,k}\right\vert ^{2}=\left\Vert f\right\Vert
	_{L^{2}\left( \mathbb{T}^{q},\nu \right) }^{2}$. Therefore, the following reconstruction formula holds in the $L^{2}$-sense
	\begin{equation*}
	f\left( \theta \right) =\sum_{j\geq 0}\sum_{k=1}^{K_j}\beta _{j,k}\psi _{j,k}\left( \theta \right),\ \ \theta \in \mathbb{T}^{q}.
	\end{equation*}
\section{Proofs of the main results}\label{sec:approx}
\noindent Observe that, by Lemma \ref{lemmarepresentation}, $U_j\l(t\r)$ can be rewritten as a double Poisson integral of a needlet-based kernel, namely
	\begin{equation}
	\label{repofujasdoubleint}
	U_{j}\left( t\right) =\frac{1}{2}\int_{T} h_{j} \left( \theta _{1},\theta
	_{2}\right) \hat{N_{t}}\left( d\theta _{1}\right) \hat{N_{t}}%
	\left( d\theta _{2}\right),
	\end{equation}
where $h_{j}$ is given by \eqref{eqn:kerdef} and the integration domain $T$ is given by \eqref{eqn:T}. In the upcoming proof of Theorem \ref{maintheorem}, the strategy will be to apply Proposition \ref{wassboundfordouble} to the normalized version of the double integral representation of $U_j(t)$ given by \eqref{repofujasdoubleint}.
\begin{proof}[Proof of Theorem \ref{maintheorem}]
\noindent Starting with the first part of the statement and hence assuming that Condition \ref{cond1} holds, one can write, in view of \eqref{eqn:equivsum} and Lemma \ref{lemma:kerdue}, that
\begin{eqnarray*}
\norm{h_j}_{L^4\left(\mu_t^2 \right) }^{4} &=& 2R_t^2\int_{\Tq \times \Tq}\sum_{n_1,n_2,n_3,n_4 \in \Lambda_j}\prod_{i=1}^{4}b^2\left( B^{-j}\ell_{n_i}\right)s_{n_1}\left( \theta_1\right) \overline{s_{n_1}}\left( \theta_2\right)\overline{s_{n_2}}\left( \theta_1\right)\\&& s_{n_2}\left( \theta_2\right)s_{n_3}\left( \theta_1\right) \overline{s_{n_3}}\left( \theta_2\right) \overline{s_{n_4}}\left( \theta_1\right) s_{n_4}\left( \theta_2\right)f\left( \theta_1\right) \overline{f}\left( \theta_2\right)d\theta_1 d\theta_2 \\
&=& 2R_t^2\sum_{n_1,n_2,n_3,n_4 \in \Lambda_j}\prod_{i=1}^{4}b^2\left( B^{-j}\ell_{n_i}\right)a_{n_1 - n_2 + n_3 - n_4}\overline{a_{n_1 - n_2 + n_3 - n_4}}  \\
&\sim &  R_t^2\sum_{\ell_{n_1},\ell_{n_2}\ell_{n_3},\ell_{n_4} \in \sbra{B^{j-1},B^{j+1}}}c_{\ell_{n_1}}c_{\ell_{n_2}}c_{\ell_{n_3}}c_{\ell_{n_4}}
\prod_{i=1}^{4}b^2\left( B^{-j}\ell_{n_i}\right) \left(\ell_{n_1 - n_2 + n_3 - n_4} +1\right)^{-2\alpha}. 
\end{eqnarray*}
Fixing $\ell_{n_1},\ell_{n_2},\ell_{n_3}$ and summing over $\ell_{n_4}$ from zero to infinity yields
\begin{equation*}
\sum_{\ell_{n_4}=0}^{\infty}\left( \ell_{n_1 - n_2 + n_3 - n_4} +1\right)^{-2\alpha}=O\left(1\right)
\end{equation*}
as $\alpha > \frac{1}{2}$. Hence, 
\begin{equation*}
	\norm{h_j}_{L^4\left(\mu_t^2 \right) }^{4} 
	 \lesssim  R_t^2\sum_{\ell_{n_1},\ell_{n_2}\ell_{n_3}\in \sbra{B^{j-1},B^{j+1}}}1 = R_t^2 B^{3j}. 
\end{equation*}
In conclusion, using Lemma \ref{lemma:variancelower} yields
\begin{equation*}
\norm{\frac{h_j}{\sqrt{\mbox{\rm Var}\left(U_{j}(t)\right) }}}_{L^4\left(\mu_t^2 \right) }^{4} \lesssim O\left( R_t^{-2}B^{j}\right).
\end{equation*}
\noindent In order to compute $\norm{h_j \star_{2}^{1} h_j }_{L^2\left(\mu_t \right) }$, \eqref{eqn:equivsum} can be used once more to obtain 
\begin{eqnarray*}
&& \norm{h_j \star_{2}^{1} h_j }_{L^2\left(\mu_t \right) }^{2} =  \int_{T^3}h_{j}\left(\theta_1,\theta_2 \right)\overline{h_{j}}\left(\theta_1,\theta_2 \right)h_{j}\left(\theta_1,\theta_3 \right) \overline{h_{j}}\left(\theta_1,\theta_3 \right)\mu_t\left(d\theta_2\right)\overline{\mu_t}\left(d\theta_3\right)\mu_t\left(d\theta_1\right) \\
&& = 2R_t^3\int_{\Tq \times \Tq \times \Tq}\sum_{n_1,n_2,n_3,n_4 \in \Lambda_j}\prod_{i=1}^{4}b^2\left( B^{-j}\ell_{n_i}\right)s_{n_1}\left( \theta_1\right) \overline{s_{n_1}}\left( \theta_2\right) \overline{s_{n_2}}\left( \theta_1\right) s_{n_2}\left( \theta_2\right)s_{n_3}\left( \theta_1\right) \overline{s_{n_3}}\left( \theta_3\right) \overline{s_{n_4}}\left( \theta_1\right) \\
&& \qquad\qquad\qquad\qquad\qquad\qquad\qquad\qquad\qquad\qquad\qquad\qquad\qquad\qquad  s_{n_4}\left( \theta_3\right) f\left( \theta_2\right) \overline{f}\left( \theta_3\right) f\left( \theta_1\right)  d\theta_2 d\theta_3 d\theta_1 \\
&& = 2R_t^3\sum_{n_1,n_2,n_3,n_4 \in \Lambda_j}\prod_{i=1}^{4}b^2\left( B^{-j}\ell_{n_i}\right)a_{n_1 - n_2 + n_3 - n_4}\overline{a_{n_3 - n_4}}a_{n_1 - n_2} \\
&& \sim  R_t^3\sum_{\ell_{n_1},\ell_{n_2},\ell_{n_3},\ell_{n_4} \in \sbra{B^{j-1},B^{j+1}}}c_{\ell_{n_1}}c_{\ell_{n_2}}c_{\ell_{n_3}}c_{\ell_{n_4}}\prod_{i=1}^{4}b^2\left( B^{-j}\ell_{n_i}\right) \left(\ell_{ n_1 - n_2 + n_3 - n_4} +1\right)^{-\alpha} \\
&& \qquad\qquad\qquad\qquad\qquad\qquad\qquad\qquad\qquad\qquad\qquad\qquad\qquad\qquad\quad \left(\ell_{ n_3 - n_4}+1\right)^{-\alpha_1}\left(\ell_{ n_1 - n_2} +1\right)^{-\alpha} \\
&& \lesssim  R_t^3\sum_{\ell_{n_1},\ell_{n_2},\ell_{n_3},\ell_{n_4} \in \sbra{B^{j-1},B^{j+1}}}\left(\ell_{ n_3 - n_4} +1\right)^{-\alpha}\left(\ell_{n_1 - n_2}+1\right)^{-\alpha}.  
\end{eqnarray*}
In the case where $\alpha>1$, fixing $n_1,n_3$ and summing over $n_2,n_4$ yields $\norm{h_j \star_{2}^{1} h_j }_{L^2\left(\mu_t \right) }^{2}
	 \lesssim  R_t^3B^{2j}$. On the other hand, if $\frac{1}{2}<\alpha<1$, we use a Riemann sum argument to get $\norm{h_j \star_{2}^{1} h_j }_{L^2\left(\mu_t \right) }^{2}
\lesssim  R_t^3B^{2j\left(2-\alpha\right)}$. In conclusion, we have
\begin{equation*}
\norm{\left( \frac{h_j}{\sqrt{\mbox{\rm Var}\left(U_{j}(t)\right) }}\right) \star_{2}^{1} \left( \frac{h_j}{\sqrt{\mbox{\rm Var}\left(U_{j}(t)\right) }}\right) }_{L^2\left(\mu_t \right) }^{2} \lesssim \begin{cases} R_t^{-1} & \text{if } \alpha >1, \\ R_t^{-1}B^{2j\left(1-\alpha\right)} & \text{if } \frac{1}{2}<\alpha <1. \end{cases}
\end{equation*}
Finally, using \eqref{eqn:equivsum} again, it holds that 
\begin{eqnarray*}
\norm{h_j \star_{1}^{1} h_j }_{L^2\left(\mu_t \right) }^{2} &=& 
\int_{T^4}h_{j}\left(\theta_1,\theta_3 \right)\overline{h_{j}}\left(\theta_2,\theta_3 \right)h_{j}\left(\theta_1,\theta_4 \right)\overline{h_{j}}\left(\theta_2,\theta_4 \right)\mu_t\left(d\theta_3\right)\overline{\mu_t}\left(d\theta_4\right)\mu_t\left(d\theta_1\right)\overline{\mu_t}\left(d\theta_2\right) \\
&=& 2R_t^4\int_{\Tq \times \Tq \times\Tq \times\Tq }\sum_{n_1,n_2,n_3,n_4 \in \Lambda_j}\prod_{i=1}^{4}b^2\left( B^{-j}\ell_{n_i}\right)s_{n_1}\left( \theta_1\right) \overline{s_{n_1}}\left( \theta_3\right)\\&&\overline{s_{n_2}}\left( \theta_2\right) s_{n_2}\left( \theta_3\right)s_{n_3}\left( \theta_1\right) \overline{s_{n_3}}\left( \theta_4\right) \overline{s_{n_4}}\left( \theta_2\right) s_{n_4}\left( \theta_4\right)f\left( \theta_1\right) \overline{f}\left( \theta_2\right) f\left( \theta_3\right) \overline{f}\left( \theta_4\right)  d\theta_1 d\theta_2 d\theta_3 d\theta_4 \\
&=& 2R_t^4\sum_{n_1,n_2,n_3,n_4 \in \Lambda_j}\prod_{i=1}^{4}b^2\left( B^{-j}\ell_{n_i}\right)a_{n_1 + n_3}\overline{a_{n_2+ n_4}}a_{n_2 - n_1}\overline{a_{n_3- n_4}}\\
& \lesssim & R_t^4\sum_{\ell_{n_1},\ell_{n_2},\ell_{n_3},\ell_{n_4} \in \sbra{B^{j-1},B^{j+1}}} c_{\ell_{n_1}}c_{\ell_{n_2}}c_{\ell_{n_3}}c_{\ell_{n_4}} \\
&& \qquad\qquad\qquad  \left(\ell_{n_1 + n_3}+1\right)^{-\alpha}\left(\ell_{ n_2+ n_4} +1\right)^{-\alpha}\left(\ell_{ n_2 - n_1} +1\right)^{-\alpha}\left(\ell_{ n_3- n_4}+1\right)^{-\alpha}  .
\end{eqnarray*}
In the case where $\alpha >1$, using the same summability argument as before, one gets
\begin{equation*}
	\norm{h_j \star_{1}^{1}  h_j }_{L^2\left(\mu_t \right) }^{2} \lesssim  R_t^4\sum_{\ell_{n_1},\ell_{n_2},\ell_{n_3},\ell_{n_4} \in \sbra{B^{j-1},B^{j+1}}}\left( \ell_{n_2+ n_4} +1\right)^{-\alpha}\left(\ell_{ n_2 - n_1} +1\right)^{-\alpha}\left(\ell_{ n_3- n_4} +1\right)^{-\alpha}  \lesssim R_t^4 B^j.  
\end{equation*}
In the case where $\frac{1}{2}\leq \alpha \leq 1$, one can use a Riemann sum argument once again to get 
\begin{eqnarray*}
	\norm{h_j \star_{1}^{1} h_j }_{L^2\left(\mu_t \right) }^{2} &\lesssim &  R_t^4\sum_{\ell_{n_1},\ell_{n_2},\ell_{n_3},\ell_{n_4} \in \sbra{B^{j-1},B^{j+1}}} \left(\ell_{ n_2+ n_4}+1\right)^{-\alpha}\left(\ell_{n_2 - n_1 } +1\right)^{-\alpha}\left(\ell_{ n_3- n_4 }+1\right)^{-\alpha}  \\
	&\lesssim & R_t^4 B^{4j\left(1-\alpha\right)}.
\end{eqnarray*}
Summing up yields
\begin{equation*}
\norm{\left( \frac{h_j}{\sqrt{\mbox{\rm Var}\left(U_{j}(t)\right) }}\right) \star_{1}^{1} \left( \frac{h_j}{\sqrt{\mbox{\rm Var}\left(U_{j}(t)\right) }}\right) }_{L^2\left(\mu_t \right) }^{2} \lesssim \begin{cases}  B^{-j} & \text{if } \alpha >1, \\ B^{2j\left(1-2\alpha\right)} & \text{if } \frac{1}{2}<\alpha <1. \end{cases}
\end{equation*}
Applying Proposition \ref{wassboundfordouble} concludes the first part of the proof.
\\~\\
For the second part of the statement, assume that Condition \ref{cond2} holds and observe that for any function $$g\bra{n}=\prod_{m=1}^qg_m\bra{n_{m}},$$ there exists a constant $C>0$ such that
	\begin{eqnarray}
	\sum_{n\in\Lambda_j} b^2\bra{B^{-j}\ell_n}g\bra{n} &\leq & \prod_{m=1}^q\sum_{n_{m}\in\left[B^{j-1},B^{j+1}\right]} \bra{\sup_{x\in\left[B^{j-1},B^{j+1}\right]} b\bra{x}} g_m\bra{n_{m}} \nonumber\\
	&\leq & C\prod_{m=1}^q\sum_{n_{m}\in\left[B^{j-1},B^{j+1}\right]}  g_m\bra{n_{m}}.\label{eqn:bbound}
	\end{eqnarray}
Using \eqref{eqn:bbound} and following a similar procedure to the one used to prove the first part of the statement, one obtains
	\begin{eqnarray*}
		\norm{h_j}_{L^4\left(\mu_t^2 \right) }^{4} 
		&=& 2R_t^2\sum_{n_1,n_2,n_3,n_4 \in \Lambda_j}\prod_{i=1}^{4}b^2\left( B^{-j}\ell_{n_i}\right)a_{n_1 - n_2 + n_3 - n_4}\overline{a_{n_1 - n_2 + n_3 - n_4}} \\
		& \sim & R_t^2\prod_{m=1}^q \bra{\vert n_{1,m} - n_{2,m} +n_{3,m}- n_{4,m}\vert +1}^{-2\alpha}. 
	\end{eqnarray*}
	For any $m=1,\ldots,q$, fixing $n_{1,m},n_{2,m},n_{3,m}$ and sum over $n_{4,m}$ from zero to infinity yields
	\begin{equation*}
	\sum_{n_{4,m}=0}^{\infty}\left( \vert n_{1,m} - n_{2,m} + n_{3,m} - n_{4,m} +1\right)^{-2\alpha}=O\left(1\right)
	\end{equation*}
	as $\alpha > \frac{1}{2}$. Hence, 
	\begin{equation*}
	\norm{h_j}_{L^4\left(\mu_t^2 \right) }^{4} 
	\lesssim  R_t^2\sum_{\ell_{n_1},\ell_{n_2}\ell_{n_3}\in \sbra{B^{j-1},B^{j+1}}}1 = R_t^2 B^{3jq}. 
	\end{equation*}
	In conclusion, using Lemma \ref{lemma:variancelower}
	\begin{equation*}
	\norm{\frac{h_j}{\sqrt{\mbox{\rm Var}\left(U_{j}(t)\right) }}}_{L^4\left(\mu_t^2 \right) }^{4} \lesssim  R_t^{-2}B^{jq}.
	\end{equation*}
	A similar procedure is used also to establish an upper bound for $\norm{h_j \star_{2}^{1} h_j }_{L^2\left(\mu_t \right) }$ and $\norm{h_j \star_{1}^{1} h_j }_{L^2\left(\mu_t \right) }$, leading to
	\begin{equation*}
	\norm{\left( \frac{h_j}{\sqrt{\mbox{\rm Var}\left(U_{j}(t)\right) }}\right) \star_{2}^{1} \left( \frac{h_j}{\sqrt{\mbox{\rm Var}\left(U_{j}(t)\right) }}\right) }_{L^2\left(\mu_t \right) }^{2} \lesssim \begin{cases} R_t^{-1} & \text{if } \alpha >1, \\ R_t^{-1}B^{2j\left(1-\alpha\right)} & \text{if } \frac{1}{2}<\alpha <1 \end{cases}
	\end{equation*}
and
	\begin{equation*}
	\norm{\left( \frac{h_j}{\sqrt{\mbox{\rm Var}\left(U_{j}(t)\right) }}\right) \star_{1}^{1} \left( \frac{h_j}{\sqrt{\mbox{\rm Var}\left(U_{j}(t)\right) }}\right) }_{L^2\left(\mu_t \right) }^{2} \lesssim \begin{cases}  B^{-j} & \text{if } \alpha >1, \\ B^{2j\left(1-2\alpha\right)} & \text{if } \frac{1}{2}<\alpha <1. \end{cases}
	\end{equation*}
Applying Proposition \ref{wassboundfordouble} concludes the proof.
\end{proof}

\section{Auxiliary results}\label{sec:proof}
\noindent The following lemma shows that the estimator $U_{j}\left( t\right)$ defined in \eqref{mainestimator} can be represented as a double Poisson integral.
\begin{lemma}	\label{lemmarepresentation}Let $U_{j}\left( t\right) $ be given by \eqref{mainestimator}. Then, it holds that 
	\begin{equation}\label{eqn:kernel}
	U_{j}\left( t\right) =\frac{1}{2}\int_{T} h_{j} \left( \theta _{1},\theta
	_{2}\right) \hat{N_{t}}\left( d\theta _{1}\right) \hat{N_{t}}%
	\left( d\theta _{2}\right),
	\end{equation}
	where the symmetric function $h_{j} \in L^2\left(\mu_t^2 \right) $ is given by 
	\begin{eqnarray}
	h_{j}\left( \theta _{1},\theta _{2}\right)  &:=&\sum_{k=1}^{K_j}\left[ \psi
	_{j,k}\left( \theta _{1}\right) \psi _{j,k}\left( \theta _{2}\right) \ind{\mathbb{T}^q_{1}\times \mathbb{T}^q_{1}}
	\left( \theta _{1},\theta_{2}\right) -\psi _{j,k}\left( \theta _{1}\right) \psi _{j,k}\left( \theta
	_{2}\right) \ind{\mathbb{T}^q_{1}\times \mathbb{T}^q_{2}}\left(
	\theta _{1},\theta _{2}\right) \right.  \nonumber \\
	&& \left. \qquad\qquad - \psi _{j,k}\left( \theta _{1}\right) \psi _{j,k}\left( \theta
	_{2}\right) \ind{\mathbb{T}^q_{2}\times \mathbb{T}^q_{1}}\left(
	\theta _{1},\theta _{2}\right)+\psi _{j,k}\left( \theta _{1}\right) \psi
	_{j,k}\left( \theta _{2}\right) \ind{\mathbb{T}^q_{2}\times \mathbb{T}^q_{2}}\left( \theta _{1},\theta _{2}\right) \right]
	\label{eqn:kerdef}
\end{eqnarray}
and where the integration domain $T$ is defined as
\begin{equation}\label{eqn:T}
T := \left( \mathbb{T}^q_{1}\times \mathbb{T}^q_{1}\right) \cup \left( \mathbb{T}^q_{1}\times \mathbb{T}^q_{2}\right) \cup \left( \mathbb{T}^q_{2}\times \mathbb{T}^q_{1}\right) \cup \left( \mathbb{T}^q_{2}\times \mathbb{T}^q_{2}\right).
\end{equation}
\end{lemma}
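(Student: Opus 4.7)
The plan is to unfold $U_j\bra{t}$ algebraically, to exploit the Poisson atomic structure to express it as an off-diagonal uncompensated double Poisson integral, and then to convert the latter into a compensated double stochastic integral with the symmetric kernel $h_j$.

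First I would introduce the auxiliary function $\phi_k := \psi_{j,k}\bra{\ind{\mathbb{T}_1^q} - \ind{\mathbb{T}_2^q}}$. The disjointness of $\mathbb{T}_1^q$ and $\mathbb{T}_2^q$ gives $\phi_k^2 = \psi_{j,k}^2\bra{\ind{\mathbb{T}_1^q} + \ind{\mathbb{T}_2^q}}$, so that the $k$-th summand of $U_j\bra{t}$ rewrites as $\bra{\int \phi_k \, dN_t}^2 - \int \phi_k^2 \, dN_t$. Writing $N_t = \sum_i \delta_{X_i}$ and separating the $i=j$ diagonal contribution from the off-diagonal one in the square yields the elementary identity $\bra{\int \phi_k \, dN_t}^2 - \int \phi_k^2 \, dN_t = \sum_{i \neq j} \phi_k\bra{X_i}\phi_k\bra{X_j}$. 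Summing over $k$ and expanding each $\phi_k\bra{\theta_1}\phi_k\bra{\theta_2}$ into the four indicator terms attached to $\mathbb{T}_1^q \times \mathbb{T}_1^q$, $\mathbb{T}_1^q \times \mathbb{T}_2^q$, $\mathbb{T}_2^q \times \mathbb{T}_1^q$ and $\mathbb{T}_2^q \times \mathbb{T}_2^q$ recovers exactly the symmetric kernel $h_j$ of \eqref{eqn:kerdef}, giving $U_j\bra{t} = \sum_{i \neq j} h_j\bra{X_i, X_j}$, which is the off-diagonal double Poisson integral of $h_j$ on the domain $T$ of \eqref{eqn:T}. Symmetry of $h_j$ and membership in $L^2\bra{\mu_t^2}$ are immediate from its definition and from the needlet $L^p$ bounds \eqref{eqn:Lpnorm}.

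It then remains to convert the off-diagonal uncompensated integral into a compensated one. Substituting $N_t = \hat N_t + \mu_t$ and expanding the four resulting double integrals, the cross and purely deterministic terms contribute only through the marginal $\bar h_j\bra{\theta} := \int h_j\bra{\theta,\theta'}\,\mu_t\bra{d\theta'} = \sum_k \phi_k\bra{\theta} \int \phi_k \, d\mu_t$. The null-hypothesis symmetry of $\mu_t$ across the two copies of the torus forces $\int \phi_k \, d\mu_t = 0$ for every $k$, so that $\bar h_j \equiv 0$ and all such correction terms drop out. One then reaches the claimed identity $U_j\bra{t} = \tfrac{1}{2} \int_T h_j\bra{\theta_1,\theta_2} \hat N_t\bra{d\theta_1}\hat N_t\bra{d\theta_2}$, the factor $\tfrac{1}{2}$ absorbing the double counting of ordered pairs intrinsic to the paper's convention for the double stochastic integral of a symmetric kernel. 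The main obstacle will be the careful bookkeeping of this last passage and the systematic cancellation of the first- and zeroth-order corrections via the exchange symmetry of the two tori; the algebraic steps producing the representation $h_j = \sum_k \phi_k \otimes \phi_k$ are otherwise entirely mechanical.
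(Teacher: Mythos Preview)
Your argument is correct and reaches the same conclusion, but the route differs from the paper's. The paper first centers each $N_t$ to $\hat N_t$ (using the null hypothesis so that the deterministic shifts from $\mathbb{T}_1^q$ and $\mathbb{T}_2^q$ cancel in the linear term), then applies the multiplication formula of Proposition~\ref{prodformula} to each product $I_1(\psi_{j,k}\ind{\mathbb{T}_a^q})I_1(\psi_{j,k}\ind{\mathbb{T}_b^q})$; the resulting $I_1$ and constant corrections cancel against the $-\int\psi_{j,k}^2\,dN_t$ and $-2\|\psi_{j,k}\|_{L^2(\mu_t)}^2$ terms, leaving exactly the $I_2$ contribution with kernel $h_j$. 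You instead work with the atomic representation, use the elementary identity $(\sum_i a_i)^2-\sum_i a_i^2=\sum_{i\neq j}a_ia_j$ to obtain the off-diagonal $U$-statistic $\sum_{i\neq j}h_j(X_i,X_j)$ directly, and only then compensate, invoking the null hypothesis through $\int\phi_k\,d\mu_t=0$ to kill the first-chaos and constant terms. Your path is more combinatorial and avoids quoting the full product formula, while the paper's stays inside the Wiener--It\^o calculus; both hinge on the same cancellation driven by the exchange symmetry of the two tori. Your closing remark on the factor $\tfrac12$ is a little loose---it would be cleaner to state that $\sum_{i\neq j}h_j(X_i,X_j)=I_2(h_j)$ once the degeneracy $\bar h_j\equiv 0$ is established, and then match this to the paper's notation $\tfrac12\int_T h_j\,d\hat N_t\,d\hat N_t$---but the substance is right.
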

\begin{proof}
Centering the measures $N_{t}$ appearing in the expression of $U_{j}\left( t\right) $ given by \eqref{mainestimator} yields
	\begin{eqnarray*}
		U_{j}\left( t\right)  &=& \sum_{k=1}^{K_j}\left[ \left( \int_{\mathbb{T}^q_{1}}\psi
		_{j,k}\left( \theta _{1}\right) \hat{N_{t}}\left( d\theta _{1}\right) -\int_{\mathbb{T}^q_{2}}\psi _{j,k}\left( \theta _{1}\right) \hat{	N_{t}}\left( d\theta _{1}\right) \right) ^{2} \right.  \\
		&& \left. \qquad\qquad \qquad \qquad \qquad- \int_{\mathbb{T}^q_{1}}\psi _{j,k}^{2}\left( \theta _{1}\right) 
		\hat{N_{t}}\left( d\theta _{1}\right) -\int_{\mathbb{T}^q_{2}}\psi
		_{j,k}^{2}\left( \theta _{1}\right) \hat{N_{t}}\left( d\theta
		_{1}\right) -2\left\Vert \psi _{j,k}\right\Vert _{L^{2}\left( \mu _{t}\right)
		}^{2}\right].
	\end{eqnarray*}
From expanding the square and using the product formula for Poisson multiple integrals stated in Proposition \ref{prodformula}, it follows that
	\begin{eqnarray*}
		U_{j}\left( t\right)  &=&\sum_{k=1}^{K_j}\left[ \int_{\mathbb{T}^q_{1}\times \mathbb{T}^q_{1}}
		\psi _{j,k}\left( \theta _{1}\right) \psi _{j,k}\left( \theta _{2}\right)
	 \hat{N_{t}}\left( d\theta _{1}\right) \hat{N_{t}}\left(
		d\theta _{2}\right) 
		+\int_{\mathbb{T}^q_{2}\times \mathbb{T}^q_{2}}\left( \psi _{j,k}\left( \theta _{1}\right) \psi _{j,k}\left( \theta
		_{2}\right) \right) \hat{N_{t}}\left( d\theta _{1}\right) \hat{%
			N_{t}}\left( d\theta _{2}\right) \right.  \\
			&&\left. 	-\int_{\mathbb{T}^q_{1}\times \mathbb{T}^q_{2}}\psi _{j,k}\left( \theta
		_{1}\right) \psi _{j,k}\left( \theta _{2}\right) \hat{N_{t}}\left(
		d\theta _{1}\right) \hat{N_{t}}\left( d\theta _{2}\right) 
		-\int_{\mathbb{T}^q_{2}\times \mathbb{T}^q_{1}}\psi_{j,k}\left( \theta
		_{1}\right) \psi _{j,k}\left( \theta _{2}\right) \hat{N_{t}}\left(
		d\theta _{1}\right) \hat{N_{t}}\left( d\theta _{2}\right) \right] \\
	&=&\frac{1}{2}\int_{T}h_{j}\left( \theta _{1},\theta _{2}\right) \hat{%
		N_{t}}\left( d\theta _{1}\right) \hat{N_{t}}\left( d\theta _{2}\right),
	\end{eqnarray*}
where $h_{j}\left( \cdot ,\cdot \right) $ is the kernel given by \eqref{eqn:kerdef} and $T$ the domain defined in \eqref{eqn:T}, as claimed.
\end{proof}
\noindent The following result provides an alternate expression of the kernel $h_j$ be defined in \eqref{eqn:kerdef} used to reduce the complexity of the domain $T$ defined in \eqref{eqn:T}.
\begin{lemma}\label{lemma:kerdue}
	Let $h_j$ be given by \eqref{eqn:kerdef}. For all $a,b=1,2$ and for $\theta_1,\theta_2 \in \mathbb{T}^q$, define
	\begin{equation}\label{eqn:kerpart}
	h_{ab,j}\bra{\theta_1,\theta_2}=\sum_{n\in\Lambda_j} b^2\bra{\barg}s_n\bra{\theta_1}\cc{s_n\bra{\theta_2}}\ind{\mathbb{T}^q_a\times \mathbb{T}^q_b}\bra{\theta_1,\theta_2}.
	\end{equation}
	Then, it holds 
	\begin{equation}\label{eqn:kerdecomp}
	h_j\bra{\theta_1,\theta_2}=\sum_{a,b=1}^2\bra{-1}^{a+b}h_{ab,j}\bra{\theta_1,\theta_2}.
	\end{equation}
\end{lemma}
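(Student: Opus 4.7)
The plan is to first observe that the pattern of signs $+,-,-,+$ appearing in the definition \eqref{eqn:kerdef} of $h_j$ matches $(-1)^{a+b}$ as $a,b$ range over $\bbra{1,2}$. Since the indicator factors do not depend on $k$, the sum over $k$ can be factored out of the bracket, so the claim reduces to establishing the reproducing kernel identity
\begin{equation*}
\sum_{k=1}^{K_j}\needlet{\theta_1}\needlet{\theta_2} = \sum_{n\in\Lambda_j^q} b^2\bra{B^{-j}\ell_n}\, s_n\bra{\theta_1}\,\cc{s_n\bra{\theta_2}},
\end{equation*}
after which multiplying both sides by $\ind{\mathbb{T}^q_a\times\mathbb{T}^q_b}$ and summing against the signs $(-1)^{a+b}$ produces the decomposition \eqref{eqn:kerdecomp}.

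To establish this reproducing kernel identity, I would substitute the explicit expression \eqref{eqn:needlet} of the needlets into the product, obtaining the double sum
\begin{equation*}
\needlet{\theta_1}\needlet{\theta_2} = \cubew \sum_{n,m\in\Lambda_j^q}\bfun{B^{-j}\ell_n}\bfun{B^{-j}\ell_m}\, s_n\bra{\theta_1}s_m\bra{\theta_2}\,\cc{s_n\bra{\cubep}}\,\cc{s_m\bra{\cubep}}.
\end{equation*}
Using that $\cc{s_n\bra{\xi}s_m\bra{\xi}} = (2\pi)^{-q/2} s_{-(n+m)}\bra{\xi}$ on $\Tq$, summing over $k$, and invoking the cubature property of $\bbra{\cubep,\cubew}$, which guarantees the exactness identity $\sum_{k=1}^{K_j}\cubew\, s_p\bra{\cubep}=(2\pi)^{q/2}\delta_p^0$ for every frequency $p$ lying in the polynomial space spanned by $\Lambda_j^q+\Lambda_j^q$, forces the constraint $m=-n$ in the double sum. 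The remaining single sum then collapses via the symmetries $\ell_{-n}=\ell_n$ and $s_{-n}=\cc{s_n}$, producing exactly the right-hand side of the identity.

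The main subtle point is the cubature/exactness step: one must verify that the degree of polynomial exactness of $\bbra{\cubep,\cubew}$ is large enough to be exact on the products $s_n s_m$ with $n,m\in\Lambda_j^q$, which is precisely the degree of exactness built into the toroidal cubature system, mirroring the spherical construction of \cite{npw1} recalled in Subsection \ref{sub:needlets}. Once this property is in hand, \eqref{eqn:kerdecomp} follows at once by linearity in the four indicator terms appearing in $h_j$.
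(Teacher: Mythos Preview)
Your proposal is correct and follows essentially the same route as the paper: expand the product of two needlets via \eqref{eqn:needlet}, use the cubature exactness of $\bbra{\cubep,\cubew}$ to collapse the double frequency sum to a single one, and then read off the decomposition from the sign pattern in \eqref{eqn:kerdef}. The only cosmetic difference is that the paper writes the product as $\needlet{\theta_1}\cc{\needlet{\theta_2}}$ (harmless since $\psi_{j,k}$ is real) and invokes the cubature identity in the form $\sumk\cubew\,\cc{s_{n_1}\bra{\cubep}}s_{n_2}\bra{\cubep}=\int_{\Tq}\cc{s_{n_1}}s_{n_2}\,d\nu=\delta_{n_1}^{n_2}$, whereas you pass through $s_{-(n+m)}$ and the constraint $m=-n$; these are the same computation.
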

\begin{proof}
Observe that, by using \eqref{eqn:needlet}, one can write
\begin{eqnarray*}
\sumk \needlet{\theta_1}\cc{\needlet{\theta_2}}\ind{\mathbb{T}^q_a\times\mathbb{T}^q_b}\bra{\theta_1,\theta_2}&=&\sum_{n_1,n_2 \in \Lambda_j}\bfun{\frac{\ell_{n_1}}{B^j}}\bfun{\frac{\ell_{n_2}}{B^j}}\ind{\mathbb{T}^q_a\times\mathbb{T}^q_b}\bra{\theta_1,\theta_2}
s_{n_1}\bra{\theta_1}\cc{s_{n_2}\bra{\theta_2}} \\
&& \qquad\qquad\qquad\qquad\qquad\qquad  \times \sumk \cubew \cc{s_{n_1}\bra{\cubep}}s_{n_2}\bra{\cubep}.
\end{eqnarray*}
Using the fact that $\sumk \cubew \cc{s_{n_1}\bra{\cubep}}s_{n_2}\bra{\cubep} = \int_{\Tq} \cc{s_{n_1}\bra{\theta}}s_{n_2}\bra{\theta}\nu\left(d\theta\right)$ yields
\begin{eqnarray*}
\sumk \needlet{\theta_1}\cc{\needlet{\theta_2}}\ind{\mathbb{T}^q_a\times\mathbb{T}^q_b}\bra{\theta_1,\theta_2}&=&\sum_{n_1,n_2 \in \Lambda_j}\bfun{\frac{\ell_{n_1}}{B^j}}\bfun{\frac{\ell_{n_2}}{B^j}}\ind{\mathbb{T}^q_a\times\mathbb{T}^q_b}\bra{\theta_1,\theta_2}
s_{n_1}\bra{\theta_1}\cc{s_{n_2}\bra{\theta_2}} \\
&& \qquad\qquad\qquad\qquad\qquad\qquad  \int_{\Tq} \cc{s_{n_1}\bra{\theta}}s_{n_2}\bra{\theta}\nu\left(d\theta\right)\\
&=&\sum_{n \in \Lambda_j}b^2\bra{\frac{\ell_{n}}{B^j}}\ind{\mathbb{T}^q_a\times\mathbb{T}^q_b}\bra{\theta_1,\theta_2}s_n\bra{\theta_1-\theta_2},
\end{eqnarray*}
where the last equality comes from the orthogonality property \eqref{eqn:ortho}. Recalling the definition of $h_j$, given by \eqref{eqn:kerdef}, concludes the proof.
\end{proof}
\noindent The following lemma provides an explicit expression for the variance of $U_j(t)$, which will be crucial in deriving lower bounds for said variance in Lemma \ref{lemma:variancelower}.
\begin{lemma}\label{lemma:umoments}
Let $U_j\bra{t}$ be given by \eqref{mainestimator}. It holds that $\Ex\bra{U_j\bra{t}}=0$ and
\begin{equation*}
\Ex\bra{U^2_j\bra{t}} = 8R_t^2\sum_{n_1,n_2 \in \Lambda_{j}}b^2\left( B^{-j}\ell_{n_1}\right)b^2\left( B^{-j}\ell_{n_2}\right)\vert a_{n_1-n_2} \vert^2.
\end{equation*}
\end{lemma}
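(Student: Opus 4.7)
The plan is to exploit the double Poisson integral representation $U_j\bra{t} = \frac{1}{2}I_2\bra{h_j}$ established in Lemma \ref{lemmarepresentation}, so that both moment computations reduce to standard facts about multiple Wiener-It\^o integrals. The first moment claim is immediate: Proposition \ref{isometryprop}~(1) gives $\Ex\bra{U_j\bra{t}}=\frac{1}{2}\Ex\bra{I_2\bra{h_j}}=0$. For the second moment, the isometry formula of Proposition \ref{isometryprop}~(2), applied with $q=p=2$ and $f=g=h_j$, yields
\begin{equation*}
\Ex\bra{U_j^2\bra{t}} = \frac{1}{4}\Ex\bra{I_2\bra{h_j}^2} = \frac{1}{2}\norm{h_j}_{L^2\bra{\mu_t^2}}^2,
\end{equation*}
reducing the task to an explicit evaluation of the Hilbert-Schmidt norm of the kernel.

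To compute $\norm{h_j}_{L^2\bra{\mu_t^2}}^2$, I would first invoke Lemma \ref{lemma:kerdue} to write $h_j = \sum_{a,b=1}^{2}\bra{-1}^{a+b}h_{ab,j}$. Because the four indicator functions appearing in the definition \eqref{eqn:kerpart} of the $h_{ab,j}$ have pairwise disjoint supports, every mixed product $h_{ab,j}\,h_{cd,j}$ with $\bra{a,b}\neq\bra{c,d}$ vanishes pointwise, leaving $\abs{h_j}^2 = \sum_{a,b=1}^{2}\abs{h_{ab,j}}^2$. Expanding each $\abs{h_{ab,j}}^2$ into a double sum indexed by $n_1,n_2\in\Lambda_j^q$ and integrating against $\mu_t\otimes\mu_t$ reduces the computation to products of integrals of the form
\begin{equation*}
\int_{\Tq} s_{n_1}\bra{\theta_1}\cc{s_{n_2}\bra{\theta_1}}f\bra{\theta_1}\,\nu\bra{d\theta_1}\int_{\Tq}\cc{s_{n_1}\bra{\theta_2}}s_{n_2}\bra{\theta_2}f\bra{\theta_2}\,\nu\bra{d\theta_2}.
\end{equation*}
Substituting the harmonic expansion \eqref{eqn:densityfunc} of $f$ and using the orthonormality relation \eqref{eqn:ortho} together with the product rule for the basis \eqref{eqn:torusbasis}, each of the two integrals collapses to a single Fourier coefficient of $f$ evaluated at the shifted index $n_2-n_1$; the reality of $f$ (which gives $a_{-n}=\cc{a_n}$) then recombines the two complex conjugate factors into $\abs{a_{n_1-n_2}}^2$. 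Since the four $\bra{a,b}$-terms contribute identically, summing them and applying the $\frac{1}{2}$ from the isometry produces the announced identity.

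The main technical obstacle will be the precise bookkeeping of the multiplicative constants: the prefactor produced by the product rule $s_{n_1}\bra{\theta}s_m\bra{\theta}=\bra{2\pi}^{-q/2}s_{n_1+m}\bra{\theta}$ coming from \eqref{eqn:torusbasis} must be squared (once for each $\theta_i$-integral), combined with the $R_t^2$ factor from $\mu_t^2$, multiplied by the four equal $\bra{a,b}$-contributions, and finally balanced against the $\frac{1}{2}$ from the isometry, so that the overall constant in front of the sum comes out exactly equal to $8R_t^2$. A minor parallel point to keep in mind is that each $h_{ab,j}$ is in fact real-valued (its defining sum over $\Lambda_j^q$ is invariant under $n\mapsto-n$ and $b^2$ is real), which legitimizes passing freely between $h_j^2$ and $\abs{h_j}^2$ in the expansion above.
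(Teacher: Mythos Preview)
Your plan is essentially the paper's own argument: invoke Lemma~\ref{lemmarepresentation} for the double-integral representation, use the isometry to reduce to $\norm{h_j}_{L^2(\mu_t^2)}^2$, apply Lemma~\ref{lemma:kerdue}, kill the cross terms by the disjointness of the indicators, and then expand $f$ in Fourier modes. The paper writes this verbatim, splitting $2\norm{h_j}^2$ into $V_1+V_2$ with $V_2=0$ for exactly the reason you give.

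There is one genuine slip in your constant bookkeeping, precisely where you flagged the danger. You read Lemma~\ref{lemmarepresentation} as $U_j(t)=\tfrac{1}{2}I_2(h_j)$ and hence obtain $\Ex\bra{U_j^2(t)}=\tfrac{1}{2}\norm{h_j}^2_{L^2(\mu_t^2)}$. If you rerun the product formula yourself on the definition \eqref{mainestimator} you will see that the four pieces assembled in the proof of Lemma~\ref{lemmarepresentation} already sum to $I_2(h_j)$, so that in fact $U_j(t)=I_2(h_j)$ and $\Ex\bra{U_j^2(t)}=2\norm{h_j}^2_{L^2(\mu_t^2)}$; the $\tfrac{1}{2}$ in \eqref{eqn:kernel} reflects the paper's convention for the iterated double integral notation, not a literal $\tfrac{1}{2}I_2$. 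Since $\norm{h_j}^2=4R_t^2\sum_{n_1,n_2}\cdots$ (four equal $(a,b)$-blocks), the paper's $2\norm{h_j}^2$ gives $8R_t^2$; your $\tfrac{1}{2}\norm{h_j}^2$ would give $2R_t^2$, off by a factor of four. A secondary remark: the $(2\pi)^{-q/2}$ factors you anticipate from $s_{n_1}\cc{s_{n_2}}=(2\pi)^{-q/2}s_{n_1-n_2}$ do arise in the Fourier step; the paper silently drops them (and there is also a harmless sign flip in its Kronecker deltas), which is immaterial for the downstream asymptotic use in Lemma~\ref{lemma:variancelower} but worth noting if you want the identity to hold literally.
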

\begin{proof}
Using the representation of $U_j\bra{t}$ given by Lemma \ref{lemmarepresentation}, it is easily seen that $\Ex\bra{U_j\bra{t}}=0$. On the other hand, the isometry property of Poisson multiple integrals (see Proposition \ref{isometryprop}) yields $\E\left( U_{j}(t)^2\right) =  2\norm{h_j}_{L^2\left( \mu_t^2\right) }^{2}$, where, using Lemma \ref{lemma:kerdue}, it holds that
\begin{equation*}
2\norm{h_j}_{L^2\left( \mu_t^2\right) }^{2}= 2\int_{T}\bra{\sum_{a,b=1}^2 \bra{-1}^{a+b}h_{ab,j}\bra{\theta_1,\theta_2}}^2\mu_t\bra{d\theta_1} \cc{\mu_t\bra{d\theta_2}} := V_1+V_2, 
\end{equation*}
with
\begin{equation*}
V_1 = 2\int_T \sum_{a,b=1}^2 h_{ab,j}^2\l(\theta_1,\theta_2\r)\mu_t\l(d\theta_1\r) \cc{\mu_t}\l(d\theta_2\r)
\end{equation*}
and 
\begin{eqnarray*}
V_2 &=& 4\int_T h_{11,j}\l(\theta_1,\theta_2\r)h_{22,j}\l(\theta_1,\theta_2\r)\mu_t\l(d\theta_1\r) \cc{\mu_t}\l(d\theta_2\r)+ 4\int_T  h_{12,j}\l(\theta_1,\theta_2\r)h_{21,j}\l(\theta_1,\theta_2\r)\mu_t\l(d\theta_1\r) \cc{\mu_t}\l(d\theta_2\r)\\
&& \qquad\qquad\qquad\qquad\quad + 4\int_T \sum_{b\neq a, a=1}^2 h_{aa,j}\l(\theta_1,\theta_2\r)\bra{h_{ba,j}\l(\theta_1,\theta_2\r)+h_{ab,j}\l(\theta_1,\theta_2\r)}\mu_t\l(d\theta_1\r) \cc{\mu_t}\l(d\theta_2\r).
\end{eqnarray*}
Observe that $V_2 = 0$ as each $\theta_i$, $i=1,2$, can only belong to one of the disjoint tori $\Tq_1$ or $\Tq_2$. On the other hand, using the Fourier expansion of the density function given by $$f\left( \theta \right) = \sum_{n \in \mathbb{Z}^q}a_n s_n\left(\theta \right),$$ we get 
\begin{eqnarray*}
	V_1 &=& 8R_t^2\sum_{n_1,n_2 \in \Lambda_{j}}\sum_{n_3,n_4 \in \mathbb{Z}}b^2\left( \frac{\ell_{n_1}}{B^{j}}\right)b^2\left( \frac{\ell_{n_2}}{B^{j}}\right)a_{n_3}\overline{a_{n_4}} \int_{\mathbb{T}^q}s_{n_1}\left( \theta_1\right)\overline{s_{n_2}}\left( \theta_1\right)s_{n_3}\left( \theta_1\right)  d\theta_1  \\
	&& \qquad\qquad\qquad\qquad\qquad\qquad\qquad\qquad\qquad\qquad\qquad\qquad\qquad\qquad \int_{\mathbb{T}^q}\overline{s_{n_1}}\left( \theta_2\right) s_{n_2}\left( \theta_2\right)\overline{s_{n_4}}\left(\theta_2 \right)  d\theta_2 \\
	&=& 8R_t^2\sum_{n_1,n_2 \in \Lambda_{j}}\sum_{n_3,n_4 \in \mathbb{Z}}b^2\left( \frac{\vert n_1\vert}{B^{j}}\right)b^2\left( \frac{\vert n_2\vert}{B^{j}}\right)a_{n_3}\overline{a_{n_4}}\delta_{n_1 - n_2}^{n_3} \delta_{n_1 - n_2}^{n_4} \\
	&=& 8R_t^2\sum_{n_1,n_2 \in \Lambda_{j}}b^2\left( \frac{\ell_{n_1}}{B^{j}}\right)b^2\left( \frac{\ell_{n_2}}{B^{j}}\right)\vert a_{n_1-n_2} \vert^2, 
\end{eqnarray*}
as claimed.
\end{proof}
\noindent Finally, under the assumptions stated in Conditions \ref{cond1} and \ref{cond2}, the following result provides lower bounds for the variance of $U_j(t)$.
\begin{lemma}\label{lemma:variancelower}
Under Condition \ref{cond1}, it holds that
\begin{equation}\label{eqn:variancelow1}
\E\left( U_{j}(t)^2\right) \gtrsim  R_t^2 B^j.
\end{equation}
Under Condition \ref{cond2}, it holds that
\begin{equation}\label{eqn:variancelow2}
 \E\left( U_{j}(t)^2\right) \gtrsim  R_t^2 B^{qj}.
\end{equation} 
\end{lemma}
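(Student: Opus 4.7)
The plan starts from the explicit variance formula furnished by Lemma \ref{lemma:umoments}, namely
\begin{equation*}
\E\bra{U_j(t)^2} = 8R_t^2 \sum_{n_1, n_2 \in \Lambda_j} b^2\bra{B^{-j}\ell_{n_1}} b^2\bra{B^{-j}\ell_{n_2}} \abs{a_{n_1-n_2}}^2.
\end{equation*}
Since every summand is non-negative, the natural way to extract a lower bound is to restrict the double sum to the diagonal $\bbra{n_1 = n_2}$. Under both Condition \ref{cond1} and Condition \ref{cond2} one has $a_0 = c > 0$, so the diagonal restriction immediately yields
\begin{equation*}
\E\bra{U_j(t)^2} \geq 8R_t^2 c^2 \sum_{n \in \Lambda_j} b^4\bra{B^{-j}\ell_n}.
\end{equation*}

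The next step reduces the problem to a purely combinatorial lattice-point count. By continuity of $b$, its non-negativity, and the partition of unity property, one can find a constant $c_b > 0$ and a closed interval $I \subset (B^{-1}, B)$ on which $b \geq c_b$. Restricting the sum to those $n$ with $B^{-j}\ell_n \in I$ produces
\begin{equation*}
\sum_{n \in \Lambda_j} b^4\bra{B^{-j}\ell_n} \geq c_b^4 \cdot \# \bbra{n \in \integers^q \colon \ell_n \in B^j I},
\end{equation*}
so the lemma will follow once we have a suitable lower bound on the number of integer lattice points inside the spherical annulus $\bbra{x \in \reals^q \colon \abs{x} \in B^j I}$.

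The two assertions then correspond to two different strengths of this lattice-point count. For Condition \ref{cond1}, the milder bound $B^j$ comes essentially for free by counting only lattice points along a single coordinate axis, e.g. $n = \bra{n_1, 0, \ldots, 0}$ with $n_1 \in \integers \cap B^j I$, which contributes $\sim B^j$ points and hence $\E\bra{U_j(t)^2} \gtrsim R_t^2 B^j$. For Condition \ref{cond2}, the stronger bound $B^{qj}$ requires a genuinely $q$-dimensional count, which I would obtain by inscribing in the annulus a $q$-dimensional box of side $\sim B^j$ (for instance, each coordinate restricted to an interval of length $\asymp B^j / \sqrt{q}$, chosen so that the resulting $\ell_n$ lies in $B^j I$), noting that any such box contains $\sim B^{jq}$ integer points. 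The main (though modest) obstacle is precisely this $q$-dimensional count: no delicate Gauss-circle error terms are needed since only a lower bound of the correct order is required, but one must choose the inscribed box carefully so that both the constraint on $\ell_n$ and the lattice-point density yield the desired $B^{jq}$ asymptotics.
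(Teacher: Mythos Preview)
Your proposal is correct and follows essentially the same route as the paper: start from the explicit variance formula of Lemma \ref{lemma:umoments}, restrict to a subset of $\Lambda_j$ on which $b$ is bounded away from zero, drop the off-diagonal terms $n_1 \neq n_2$, and then perform a lattice-point count in the resulting annulus. The only cosmetic difference is that the paper first substitutes the asymptotic form of $a_n$ before restricting to the diagonal, whereas you observe directly that $a_0 = c > 0$ under either condition; your version is slightly cleaner. One incidental remark: since the diagonal argument does not actually depend on which of the two conditions is in force, your box-inscription count in fact yields the stronger bound $\E\bra{U_j(t)^2} \gtrsim R_t^2 B^{qj}$ under Condition \ref{cond1} as well, though the lemma as stated only asks for $R_t^2 B^j$.
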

\begin{proof}
Recalling that under Condition \ref{cond1}, the Fourier coefficients satisfy $a_{n} \sim \left(\ell_n+1\right)^{-\alpha}$, with $\alpha > \frac{1}{2}$ in order to guaranty that $\sum_{n \in \mathbb{Z}^q}\vert a_n \vert^2 < +\infty$, yields
\begin{eqnarray*}
\E \left( U_{j} (t)^2 \right)   &\sim &  R_t^2\sum_{n_1,n_2 \in \Lambda_{j}}b^2\left(B^{-j}\ell_{n_1}\right)b^2\left( B^{-j}\ell_{n_2}\right)\left( \ell_{n_1-n_2} +1\right)^{-2\alpha}\\
& \gtrsim & R_t^2\sum_{n_1,n_2 \in \Lambda'_{j}}b^2\left( B^{-j}\ell_{ n_1}\right)b^2\left( B^{-j}\ell_{n_2}\right)\left(\ell_{ n_1-n_2} +1\right)^{-2\alpha},
\end{eqnarray*}
where $\Lambda^\prime_{j} = \left\lbrace n \in \mathbb{Z}^q \colon \ell_n  \in \left[\left(B^\prime\right)^{j-1},\left(B^\prime\right)^{j+1} \right]  \right\rbrace$ with $B^\prime <B$ such that $b\left(B^{-j}n\right)$ is bounded away from zero. Now, observe that, for any given real valued function $g$, there exists a set of coefficients $\bbra{c_{\ell_n}}$ so that 
\begin{equation}\label{eqn:equivsum}
\sum_{n\in \Lambda_j} g\bra{\ell_n}= \sum_{\ell_n\in \sbra{B^{j-1},B^{j+1}}} c_{\ell_n} g\bra{\ell_n}.
\end{equation} 
Loosely speaking, $0\leq c_{\ell_n}<\infty$ denotes the number of possible combinations of components of different $n \in \Lambda_j$ corresponding to the same $\ell_n$.
Dropping the terms for which $n_1 \neq n_2$ in the above sum, we get $ \E\left( U_{j}(t)^2\right) \gtrsim  R_t^2 B^j$, as claimed. 
\\~\\
Under Condition \ref{cond2}, the Fourier coefficients are such that $a_{n} \sim \prod_{m=1}^q\left(\vert n_{\bra{m}}\vert+1\right)^{-\alpha}$, with $\alpha > \frac{1}{2}$. In this case, one gets that
\begin{equation*}
	\E \left( U_{j} (t)^2\right)   \gtrsim   R_t^2\sum_{n_1,n_2 \in \Lambda'_{j}}b^2\left( B^{-j}\ell_{ n_1}\right)b^2\left( B^{-j}\ell_{n_2}\right)\prod_{m=1}^q\left( \vert n_1-n_2\vert +1\right)^{-2\alpha}.
\end{equation*}
Considering each component separately, one obtains that $\E \left( U_{j} (t)^2\right)   \gtrsim   R_t^2B^{qj}$, as claimed.
\end{proof}
\section*{Acknowledgements}
\noindent The authors wish to thank Domenico Marinucci for many useful remarks on an earlier version of this paper as well as for interesting discussions on this topic.

\printbibliography
\end{document}